\newtheorem{proposition}{Proposition}
\newtheorem{theorem}[proposition]{Theorem}
\newtheorem{lemma}[proposition]{Lemma}
\newtheorem{corollary}[proposition]{Corollary}
\theoremstyle{remark}
\newtheorem{remark}[proposition]{Remark}
\theoremstyle{definition}
\numberwithin{equation}{section}
\numberwithin{proposition}{section}
\numberwithin{figure}{section}
\numberwithin{table}{section}
\newcommand{\N}{\mathbb{N}}
\newcommand{\R}{\mathbb{R}}
\newcommand{\E}{\mathbb{E}}
\renewcommand{\S}{\mathbf{S}}
\newcommand{\basis}{\Sigma}
\newcommand{\std}{{\mathrm{std}}}
\newcommand{\bx}{{\mathbf{x}}}
\newcommand{\bxi}{{\boldsymbol{\xi}}}
\newcommand{\btheta}{{\boldsymbol{\theta}}}
\newcommand{\identity}{\mathbf{I}}
\newcommand{\eps}{\varepsilon}
\renewcommand{\leq}{\leqslant}
\renewcommand{\geq}{\geqslant}
\renewcommand{\subset}{\subseteq}
\renewcommand{\bar}{\overline}
\renewcommand{\tilde}{\widetilde}
\renewcommand{\hat}{\widehat}
\newcommand{\Ll}{\left}
\newcommand{\Rr}{\right}
\renewcommand{\d}{\mathrm{d}}
\newcommand{\D}{D}
\newcommand{\sP}{\mathscr{P}}
\DeclareMathOperator{\tr}{tr}
\DeclareMathOperator{\supp}{supp}
\newcommand{\la}{\left\langle}
\newcommand{\ra}{\right\rangle}
\newcommand{\diag}{\mathrm{diag}}
\newcommand{\pert}{{\mathrm{pert}}}
\newcommand{\geqpsd}{\succeq} \newcommand{\leqpsd}{\preceq} 
\newcommand{\Sym}{{\mathrm{Sym}(\D)}}
\newcommand{\ks}{{\mathbf{s}}}
\begin{document}

\author{Hong-Bin Chen}
\address{Institut des Hautes \'Etudes Scientifiques, France}
\email{hbchen@ihes.fr}

\keywords{Spin glass, self-overlap, vector spin}
\subjclass[2010]{82B44, 82D30}

\title[Self-overlap in vector spin glasses]{On the self-overlap in vector spin glasses}

\begin{abstract}
We consider vector spin glass models with self-overlap correction. Since the limit of free energy is an infimum, we use arguments analogous to those for generic models to show the following: 1) the averaged self-overlap converges; 2) the self-overlap concentrates; 3) the infimum optimizes over paths whose right endpoints are the limit of self-overlap. Lastly, using these, we directly verify the equivalence between the variational formula obtained in \cite{chen2023self} and Panchenko's generalized Parisi formula in \cite{pan.vec}.
\end{abstract}

\maketitle

\section{Introduction}

\subsection{Setting}
Fix $\D\in\N$ and let $P_1$ be a probability measure supported on the unit ball of $\R^\D$. For each $N\in\N$, we consider a probability measure $P_N$ on $\R^{D\times N}$ described as follows.
We denote a sample from $P_N$ by $\sigma = (\sigma_{k,i})_{1\leq k\leq \D,\, 1\leq i\leq N}$. Its $\R^D$-valued column vectors $(\sigma_{\cdot,i})_{1\leq i\leq N}$ are i.i.d.\ with law $P_1$. More precisely, $\d P_N(\sigma) = \otimes_{i=1}^N\d P_1(\sigma_{\cdot,i})$.

For vectors or matrices of the same size, we denote the entry-wise inner product by a dot, e.g., $a\cdot b= \sum_{i,j}a_{i,j}b_{i,j}$. 
We write $|a|=\sqrt{a\cdot a}$. 
We denote the transpose of a matrix $a$ by $a^\intercal$. 
We always evaluate the matrix multiplication before the inner product, e.g. $a\cdot bc = a\cdot(bc)$. We denote by $\S^\D$ (a subset of $\R^{\D\times \D}$) the set of $\D\times\D$ symmetric matrices. We equip $\S^\D$ with the entry-wise inner product. Let $\S^\D_+$ be the subset of $\S^\D$ consisting of positive semi-definite matrices. For any $a,\, b\in \R^{\D\times \D}$, we write $a\geqpsd b$ and $b\leqpsd a$ if $a\cdot c\geq b\cdot c$ for all $c\in \S^\D_+$. 
We often use the following simple fact (c.f.\ \cite[Theorem~7.5.4]{horn2012matrix}) that if $a \in \S^\D$, then
\begin{align}\label{e.a.b>0}
    a\in\S^\D_+ \quad \Longleftrightarrow\quad \forall b\in \S^\D_+,\ a\cdot b\geq 0.
\end{align}

For each $N$, we are given a centered real-valued Gaussian process $(H_N(\sigma))_{\sigma\in\R^{\D\times N}}$ with covariance
\begin{align}\label{e.xi}
    \E H_N(\sigma) H_N(\sigma') = N \xi\Ll(\frac{\sigma\sigma'^\intercal}{N}\Rr),\quad\forall \sigma,\,\sigma'\in \R^{\D\times N}
\end{align}
for some deterministic function $\xi:\R^{\D\times \D}\to \R$. Throughout, we assume that
\begin{enumerate}[start=1,label={\rm (H\arabic*)}]
    \item \label{i.xi_loc_lip}
    $\xi$ is differentiable and $\nabla\xi$ is locally Lipschitz;
    \item \label{i.xi_sym} $\xi\geq 0$ on $\S^\D_+$, $\xi(0)=0$, and $\xi(a)=\xi(a^\intercal)$ for all $a\in \R^{\D\times\D}$;
    \item \label{i.xi_incre} if $a,\,b\in\S^\D_+$ satisfies $a\geqpsd b$, then $\xi(a)\geq \xi(b)$ and $\nabla \xi(a)\geqpsd \nabla \xi(b)$.
\end{enumerate}
The derivative $\nabla \xi:\R^{\D\times \D}\to \R^{\D\times \D}$ is defined with respect to the entry-wise inner product.
The mixed $p$-spin vector spin glass model considered in \cite{pan.vec} belongs to this class.

For each $N\in\N$ and $x\in \S^\D$, we consider the free energy
\begin{align*}
    F_N(x) = \frac{1}{N}\E\log\int \exp\Ll(H_N(\sigma) - \frac{N}{2}\xi\Ll(\frac{\sigma\sigma^\intercal}{N}\Rr)+x\cdot \sigma\sigma^\intercal\Rr)\d P_N(\sigma)
\end{align*}
with the \textit{self-overlap correction} $- \frac{N}{2}\xi\Ll(\frac{\sigma\sigma^\intercal}{N}\Rr) = -\frac{1}{2}\E (H_N(\sigma))^2$ and an external field $x\cdot \sigma\sigma^\intercal$. In the above display, $\E$ integrates the Gaussian randomness in $H_N(\sigma)$.

If $\xi$ is convex on $\S^D_+$, then the limit of free energy is given by the Parisi formula
\begin{align}\label{e.parisi_soc}
    \lim_{N\to\infty} F_N(x) = \inf_{\pi\in\Pi} \sP(\pi,x),\quad\forall x\in \S^\D.
\end{align}
This was proved in \cite[Theorem~1.1]{chen2023self} under a stronger assumption that $\xi$ is convex on $\R^{D\times D}$ which is needed in the Guerra's interpolation for the upper bound. Under the weaker assumption as above, \eqref{e.parisi_soc} was proved in \cite[Corollary~8.3]{HJ_critical_pts} (also see \cite[Remark~8.5]{HJ_critical_pts}), where the upper bound was supplied by the Hamilton--Jacobi equation approach \cite{mourrat2020nonconvex,mourrat2020free,chen2022hamilton,chen2022hamilton2}.
We clarify that~\eqref{e.parisi_soc} is only proved at $x=0$ in both places. We can obtain~\eqref{e.parisi_soc} by substituting $\exp(x\cdot \tau\tau^\intercal)\d P_1(\tau)$ for $P_1$ therein. 
Below, we give definitions related to the right-hand side in~\eqref{e.parisi_soc}.

We set
\begin{align}\label{e.Pi}
    \Pi = \Ll\{\pi:[0,1]\to\S^\D_+\  \big|\  \text{$\pi$ is left-continuous and increasing}\Rr\}
\end{align}
where 
$\pi$ is increasing in the sense that
\begin{align*}
    s'\geq s\quad\implies\quad \pi(s')\geqpsd \pi(s).
\end{align*}
To describe the Parisi functional $\sP$, we need the Ruelle Probability Cascade (RPC). For convenience, we use the continuous version of it following \cite[Section~1]{mourrat2019parisi}. Let $\mathfrak{R}$ be the Ruelle probability cascade with overlap distributed uniformly over $[0,1]$ (see \cite[Theorem~2.17]{pan}). In particular, $\mathfrak{R}$ is a random probability measure on the unit sphere in a separable Hilbert space with the inner product denoted by $\alpha\wedge\alpha'$. Let $\alpha$ and $\alpha'$ be independent samples from $\mathfrak{R}$. Then, the law of $\alpha\wedge\alpha'$ under $\E \mathfrak{R}^{\otimes 2}$ is uniform over $[0,1]$, where $\E$ integrates the randomness in $\mathfrak{R}$. Moreover, almost surely, the support of $\mathfrak{R}$ is ultrametric in the induced topology. For precise definitions and properties, we refer to \cite[Chapter~2]{pan} (see also \cite[Chapter~5]{HJbook}).

For every $\pi\in\Pi$, conditioned on $\mathfrak{R}$, let $(w^\pi(\alpha))_{\alpha\in\supp\mathfrak{R}}$ be a centered $\R^\D$-valued Gaussian process with covariance
\begin{align}\label{e.cov_w^pi}
    \E w^\pi(\alpha)w^\pi(\alpha')^\intercal = \pi (\alpha\wedge\alpha'),\quad\forall \alpha,\,\alpha\in\supp\mathfrak{R}.
\end{align}
We refer to \cite[Section~4 and Remark~4.9]{HJ_critical_pts} for the construction and properties of this process.
By the last property in \ref{i.xi_sym}, we have that $\nabla \xi(a)\in\S^\D$ if $a \in \S^\D$. Then, the first property in \ref{i.xi_incre} and \eqref{e.a.b>0} imply that $\nabla\xi(a)\in\S^\D_+$ for every $a\in\S^\D_+$. 
Using these and the second property in~\ref{i.xi_incre}, we can see that $\nabla\xi\circ\pi\in\Pi$ for every $\pi\in\Pi$. We also need $\theta:\R^{\D\times\D} \to\R$ defined by
\begin{align}\label{e.theta}
    \theta(a) = a\cdot \nabla\xi(a)-\xi(a),\quad\forall a\in \R^{\D\times\D}.
\end{align}
Now, we can define, for $\pi\in\Pi$ and $x\in\S^\D$,
\begin{align}
    \sP(\pi,x)= \E \log\iint \exp\Ll(w^{\nabla\xi\circ\pi}(\alpha)\cdot \tau - \frac{1}{2}\nabla\xi\circ\pi(1)\cdot \tau\tau^\intercal+x\cdot\tau\tau^\intercal\Rr)\d P_1(\tau)\d \mathfrak{R}(\alpha) \label{e.sP(pi,x)}
    \\
    +\frac{1}{2}\int_0^1\theta(\pi(s))\d s .\notag
\end{align}
Here, $\E$ integrates the Gaussian randomness in $w^{\nabla\xi\circ\pi}(\alpha)$ and the randomness of $\mathfrak{R}$.
Note that $- \frac{1}{2}\nabla\xi\circ\pi(1)\cdot \tau\tau^\intercal = \E(w^{\nabla\xi\circ\pi}(\alpha)\cdot \tau)^2$ is the self-overlap correction and $x\cdot \tau\tau^\intercal$ is the external field corresponding to that in $F_N(x)$.

We have completed defining the right-hand side in~\eqref{e.parisi_soc}.

\subsection{Main results}

A very important object in the spin glass theory is the overlap $\frac{\sigma\sigma'^\intercal}{N}$ where $\sigma'$ is an independent copy of $\sigma$ sampled from the Gibbs measure. There has been tremendous progress in understanding the overlap. However, in vector spin glass, not much has been known about a much simpler object, the \textit{self-overlap} $\frac{\sigma\sigma^\intercal}{N}$. Unlike Ising spins or spherical spins, the self-overlap is not constant in general, which causes many difficulties in identifying the limit of the standard free energy without correction
\begin{align}\label{e.F^og_N}
    F^\std_N = \frac{1}{N}\E \log \int \exp\Ll(H_N(\sigma)\Rr)\d P_N.
\end{align}
In particular, we do not know if the self-overlap concentrates or if its average converges.

Our main goal is to answer these questions in the setting with self-overlap correction. Throughout, we set
\begin{align}\label{e.<>_x}
    \la \cdot\ra_x \quad\propto \quad\exp\Ll(H_N(\sigma) - \frac{N}{2}\xi\Ll(\frac{\sigma\sigma^\intercal}{N}\Rr)+x\cdot \sigma\sigma^\intercal\Rr)\d P_N(\sigma)
\end{align}
where the dependence on $N$ is kept implicit. We define $\sP:\S^\D\to\R$ by
\begin{align}\label{e.sP(x)}
    \sP(x) =\inf_{\pi\in\Pi}\sP(\pi,x),\quad\forall x\in \S^\D.
\end{align}
We will show in Proposition~\ref{p.P_diff} that $\sP$ is differentiable and we denote its derivative by $\nabla\sP$ which will be shown to be in $\S^\D_+$ (see~\eqref{e.nabla_sP>0}). For every $z\in\S^\D_+$, we define
\begin{align}\label{e.Pi(z)}
    \Pi(z) = \Ll\{\pi\in\Pi:\:\pi(1)=z\Rr\}.
\end{align}

\begin{theorem}\label{t}
If $\xi$ is convex on $\S^D_+$, then the following holds at every $x\in \S^\D$:
\begin{enumerate}
    \item \label{i.t.cvg_self-overlap} the averaged self-overlap converges:
    \begin{align*}
        \lim_{N\to\infty} \E \la \frac{\sigma\sigma^\intercal}{N}\ra_x = \nabla\sP(x);
    \end{align*}
    \item \label{i.t.conc_self-overlap} the self-overlap concentrates:
    \begin{align*}
        \lim_{N\to\infty} \E \la \Ll|\frac{\sigma\sigma^\intercal}{N}-\E\la \frac{\sigma\sigma^\intercal}{N} \ra_x\Rr|\ra_x =0.
    \end{align*}
    \item \label{i.t.opt} the Parisi formula optimizes over paths with a fixed endpoint:
    \begin{align*}
        \lim_{N\to\infty} F_N(x) = \inf_{\pi\in\Pi(\nabla \sP(x))}\sP(\pi,x).
    \end{align*}
\end{enumerate}
\end{theorem}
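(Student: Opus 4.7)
Write $R = \sigma\sigma^\intercal/N$. The three parts will be established in order, each using the previous.

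For~\eqref{i.t.cvg_self-overlap}, direct differentiation under the integral gives $\nabla F_N(x) = \E\la R\ra_x$, and the second derivative in any direction $a\in\S^\D$ equals $N\,\E\la (a\cdot R - \la a\cdot R\ra_x)^2\ra_x \geq 0$, so $F_N$ is convex in $x$. Combined with the pointwise convergence $F_N \to \sP$ from~\eqref{e.parisi_soc} and differentiability of $\sP$ at $x$ from Proposition~\ref{p.P_diff}, the classical fact that pointwise limits of convex functions have convergent gradients at points where the limit is differentiable yields $\nabla F_N(x) \to \nabla\sP(x)$.

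For~\eqref{i.t.conc_self-overlap}, the plan is to decompose
\[
\E\la|R - \E\la R\ra_x|\ra_x \leq \bigl(\E\la|R-\la R\ra_x|^2\ra_x\bigr)^{1/2} + \bigl(\E|\la R\ra_x - \E\la R\ra_x|^2\bigr)^{1/2},
\]
and handle thermal and disorder variance separately. For the thermal part, $N\,\E\la (R_{kl}-\la R_{kl}\ra_x)^2\ra_x = \partial^2_{x_{kl}} F_N(x)$, and convexity of $F_N$ in $x_{kl}$ gives $\delta \cdot \partial^2_{x_{kl}}F_N(x) \leq \partial_{x_{kl}}F_N(x+\delta e_{kl}) - \partial_{x_{kl}}F_N(x)$; sending $N\to\infty$ (using part~\eqref{i.t.cvg_self-overlap}) and then $\delta\to 0$ (using continuity of $\nabla\sP$, which follows from its existence everywhere together with convexity of $\sP$) forces this to zero. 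For the disorder part, Gaussian concentration applied to the Lipschitz-in-disorder functional $F_N$ gives $\mathrm{Var}\,F_N(x) = O(N^{-1})$, and a parallel difference-quotient argument transfers this to $\mathrm{Var}\,\la R\ra_x \to 0$.

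For~\eqref{i.t.opt}, the inequality $\sP(x) \leq \inf_{\Pi(\nabla\sP(x))}\sP(\pi, x)$ is immediate as $\Pi(\nabla\sP(x)) \subset \Pi$. For the reverse, the idea is to take a near-minimizing sequence $\pi_n$ of $\sP(\cdot, x)$ and modify each $\pi_n$ on a terminal interval $[1-\delta_n, 1]$ to land at $\nabla\sP(x)$. The cost decomposes into (i) a change in $\tfrac{1}{2}\int_0^1 \theta(\pi_n(s))\,\d s$ of order $\delta_n$ since $\theta$ is bounded on compact sets, and (ii) a change in the cavity term controlled by the Gaussian transport formula on $\{\alpha\wedge\alpha' > 1-\delta_n\}$, an event of $\E\mathfrak R^{\otimes 2}$-measure $\delta_n$. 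The crucial compatibility making $\nabla\sP(x)$ the natural target is the envelope identity $\nabla_x\sP(\pi^*, x) = \E\la\tau\tau^\intercal\ra_{\pi^*, x} = \nabla\sP(x)$ at any minimizer $\pi^*$, matched against the concentration of self-overlap from~\eqref{i.t.conc_self-overlap}. The main obstacle is precisely this last step: designing the terminal modification so that the increasing constraint on $\pi$ and left-continuity are preserved while both cost contributions vanish in the limit. This requires a careful interpolation respecting the partial order on $\S^\D_+$ and the conditional structure of the RPC near the top overlap; by contrast, parts~\eqref{i.t.cvg_self-overlap} and~\eqref{i.t.conc_self-overlap} reduce to standard convexity and Gaussian-concentration arguments driven by the convergence $F_N \to \sP$ and the differentiability of $\sP$.
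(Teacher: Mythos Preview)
Your argument for part~\eqref{i.t.cvg_self-overlap} is correct and matches the paper's.

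For part~\eqref{i.t.conc_self-overlap}, the disorder-variance step is essentially the paper's, but the thermal step has a real gap: the inequality $\delta\,\partial^2_{x_{kl}}F_N(x)\le \partial_{x_{kl}}F_N(x+\delta e_{kl})-\partial_{x_{kl}}F_N(x)$ is false for convex functions in general (take any smooth convex $f$ with $f''$ strictly decreasing near $x$; then $\int_0^\delta f''(x+s)\,\d s<\delta f''(x)$). Convexity only gives the integrated identity $\int_0^\delta \partial^2_{x_{kl}}F_N(x+se_{kl})\,\d s = \partial_{x_{kl}}F_N(x+\delta e_{kl})-\partial_{x_{kl}}F_N(x)$, which controls the \emph{average} of the second derivative over $[x,x+\delta e_{kl}]$, not its value at $x$. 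The paper (Proposition~\ref{p.so_concent}) bridges this with an integration-by-parts trick at the $L^1$ level: writing $r\,\E\la|g(\sigma^1)-g(\sigma^2)|\ra_x$ as an integral in $s\in[0,r]$ and bounding the $s$-derivative from below by $-8$ times the $L^2$ thermal variance, one gets $\tfrac{1}{N}\E\la|g(\sigma^1)-g(\sigma^2)|\ra_x\le 2\sqrt{\eps_N/(rN)}+8\eps_N$ with $\eps_N=y\cdot\nabla F_N(x+ry)-y\cdot\nabla F_N(x)$, and then $N\to\infty$ followed by $r,t\to 0$ works via differentiability of $\sP$.

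For part~\eqref{i.t.opt}, the endpoint-modification strategy is the wrong mechanism. The envelope identity $\nabla_x\sP(\pi^*,x)=\nabla\sP(x)$ computes $\E\la\tau\tau^\intercal\ra$ under the cascade Gibbs measure and says nothing about $\pi^*(1)$; there is no a priori relation between these two matrices. Nothing in the variational problem forces near-minimizers over $\Pi$ to have endpoints near $\nabla\sP(x)$, and even if one tried to reset the terminal segment, the monotonicity constraint requires $\nabla\sP(x)\geqpsd\pi_n(1-\delta_n)$, over which you have no control. The paper's proof (Proposition~\ref{p.inf_Pi(nabal_sP)}) instead revisits the lower-bound machinery: along a subsequence with well-chosen perturbation parameters, the Aizenman--Sims--Starr term is governed by the limiting overlap array $R_\infty$, which via the Ghirlanda--Guerra identities and Panchenko's synchronization yields paths $\Psi\circ\zeta_m$; setting $l=l'$ in the synchronization identity $R^{l,l'}_\infty=\Psi(\tr R^{l,l'}_\infty)$ together with the concentration from part~\eqref{i.t.conc_self-overlap} pins $R^{1,1}_\infty=\nabla\sP(x)$, which forces $\Psi\circ\zeta_m\in\Pi(\nabla\sP(x))$. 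So part~\eqref{i.t.conc_self-overlap} enters not through an envelope argument but by identifying the diagonal of the limiting array inside the cavity computation.
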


The proof relies on the crucial property that the Parisi formula~\eqref{e.parisi_soc} is an infimum in the setting with self-overlap correction. The argument is similar to the one for the generic model (see \cite[Section~3.7]{pan}). Note that the results hold at $x=0$ (i.e., with no the external field), which is the most interesting case.

Our argument does not work in the standard setting~\eqref{e.F^og_N} because the limit of $F^\std_N$ is a supremum after an infimum (see~\eqref{e.pan.parisi}).

\begin{remark}
The assumption that $\xi$ is convex on $\S^D_+$ in Theorem~\ref{t} and results in Section~\ref{s.pfs} is only used to have~\eqref{e.parisi_soc}. Hence, this assumption can be replaced by~\eqref{e.parisi_soc}. Moreover, if we replace $\S^\D$ in~\eqref{e.parisi_soc} by any open subset $O$ of $\S^\D$, we can retain Theorem~\ref{t} and results in Section~\ref{s.pfs} for $x\in O$.
\end{remark}

For the Potts spin glass with symmetric interaction (described in Section~\ref{s.pott}), we can show that the averaged self-overlap is constant and obtain the following corollary. The symmetry argument for this result is inspired by \cite{bates2023parisi}. Throughout, we denote by $\identity_D$ the $\D\times\D$ identity matrix. 

\begin{corollary}\label{c.potts}
    For a Potts spin glass model with $D$ types of spins and self-overlap correction, if $\xi$ is convex on $\S^D_+$ and symmetric, then the following holds:
    \begin{gather*}
        \lim_{N\to\infty} \E \la \Ll|\frac{\sigma\sigma^\intercal}{N}-\frac{1}{D}\identity_D\Rr|\ra_{0} =0,
        \\
        \lim_{N\to\infty} F_N(0) = \inf_{\pi\in\Pi(\frac{1}{D}\identity_D)}\sP(\pi,0).
    \end{gather*}
\end{corollary}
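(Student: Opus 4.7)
The plan is to reduce the corollary to Theorem~\ref{t} applied at $x=0$, once we identify $\nabla\sP(0)=\frac{1}{D}\identity_D$ using the $S_D$-symmetry of the Potts model.

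First, I would verify that for any $D\times D$ permutation matrix $U$, the change of variables $\sigma\mapsto U\sigma$ (acting on each column) preserves $P_N$, because $P_1$ is uniform on the standard basis $\{e_1,\dots,e_D\}$, and preserves the law of the Gaussian field $H_N$, because the symmetry of $\xi$ gives $\xi(U\sigma\sigma'^\intercal U^\intercal/N)=\xi(\sigma\sigma'^\intercal/N)$ so covariances match. The self-overlap correction transforms the same way, while the external field becomes $x\cdot U\sigma\sigma^\intercal U^\intercal = (U^\intercal xU)\cdot\sigma\sigma^\intercal$. Hence $F_N(x)=F_N(U^\intercal xU)$ for every $x\in\S^\D$, and passing to the limit via~\eqref{e.parisi_soc} yields $\sP(x)=\sP(U^\intercal xU)$.

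Next, by the differentiability stated in Proposition~\ref{p.P_diff}, differentiating $\sP(x)=\sP(U^\intercal xU)$ at $x=0$ gives $\nabla\sP(0)\cdot h = (U\nabla\sP(0)U^\intercal)\cdot h$ for every $h\in\S^\D$, so $\nabla\sP(0)$ commutes with every permutation matrix $U$. Meanwhile the Potts structure gives $\sigma_{\cdot,i}\sigma_{\cdot,i}^\intercal=e_ke_k^\intercal$ whenever $\sigma_{\cdot,i}=e_k$, so $\frac{\sigma\sigma^\intercal}{N}$ is always diagonal with trace $1$. By Theorem~\ref{t}\eqref{i.t.cvg_self-overlap}, the limit $\nabla\sP(0)$ is therefore itself diagonal with trace $1$. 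A diagonal matrix invariant under conjugation by every transposition has all diagonal entries equal, so the trace constraint forces $\nabla\sP(0)=\frac{1}{D}\identity_D$. Plugging this into parts~\eqref{i.t.conc_self-overlap} and~\eqref{i.t.opt} of Theorem~\ref{t} at $x=0$ delivers the two statements of the corollary.

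The main obstacle is the symmetry bookkeeping in the first step: verifying that the substitution $\sigma\mapsto U\sigma$ simultaneously preserves $P_N$, matches the covariance of $H_N$, matches the self-overlap correction, and turns the external field $x\cdot\sigma\sigma^\intercal$ into $(U^\intercal xU)\cdot\sigma\sigma^\intercal$. Once this is done, the rest is immediate from Theorem~\ref{t} together with the trivial Potts observation that $\sigma\sigma^\intercal$ is always a diagonal matrix of trace $N$.
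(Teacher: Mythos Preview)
Your proposal is correct and follows essentially the same symmetry argument as the paper. The only organizational difference is that the paper applies the permutation symmetry directly to the finite-$N$ Gibbs measure $\la\cdot\ra_0$ to obtain $\E\la\sigma\sigma^\intercal/N\ra_0=\frac{1}{D}\identity_D$ for every $N$, whereas you first establish $F_N(x)=F_N(U^\intercal xU)$, pass to the limit, and then differentiate $\sP$ at $0$; both routes combine the permutation invariance with the Potts facts $\tr(\sigma\sigma^\intercal)=N$ and $(\sigma\sigma^\intercal)_{k,k'}=0$ for $k\neq k'$, and then invoke Theorem~\ref{t}.
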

This result trivially extends to $x=r\identity_D$ for any $r\in\R$, because the external field $r\identity_D\cdot \sigma\sigma^\intercal = rN$ simply factors out. We emphasize that this result requires the self-overlap correction and the argument does not work for the standard Potts spin glass.

\subsection{Equivalence of formulae}
Our secondary goal is to understand the equivalence between different forms of the Parisi formula for $F^\std_N$.
Set
\begin{align}\label{e.mathcalD}
    \mathcal{D} = \overline{\mathrm{conv}}\{\tau\tau^\intercal:\:\tau\in\supp P_1\}
\end{align}
where $\overline{\mathrm{conv}}$ takes the closed convex hull of the latter set. We have
\begin{align}
    \lim_{N\to\infty} F^\std_N 
    &= \sup_{z\in \mathcal{D}}\inf_{y\in\S^\D,\,\pi\in\Pi(z) }\Ll\{\sP(\pi,y)-y\cdot z +\frac{1}{2}\xi(z)\Rr\} \label{e.pan.parisi}
    \\
    & = \sup_{z\in \S^\D_+}\inf_{y\in\S^\D_+,\,\pi\in\Pi }\Ll\{\sP(\pi,y)-y\cdot z +\frac{1}{2}\xi(z)\Rr\}.\label{e.hj.parisi}
\end{align}
The first formula~\eqref{e.pan.parisi} was obtained by Panchenko in \cite{pan.vec} through considering free energy with constraint self-overlap. In Appendix~\ref{appendix}, we explain how to rewrite the original formula there into~\eqref{e.pan.parisi}. 
The second formula first appeared in \cite[Corollary~1.3]{mourrat2020extending} for $D=1$ and was later extended to higher dimensions in \cite{chen2023self}. Knowing~\eqref{e.parisi_soc}, one can obtain~\eqref{e.hj.parisi} by solving a Hamilton--Jacobi equation.

Notice that the two formulae differ only in the range of parameters.
The first formula is more descriptive as the set for optimization is finer. The second formula can be useful due to less restriction. For instance, using the convexity of $\sP$ and $\xi$, we can switch the order of optimizations of $y$ and $z$ in \eqref{e.hj.parisi} to get a simpler expression
\begin{align*}
    \lim_{N\to\infty} F^\std_N =\sup_{y\in\S^\D_+}\inf_{\pi\in\Pi}\Ll\{\sP(\pi,y)-\frac{1}{2}\xi^*(2y)\Rr\}
\end{align*}
where $\xi^*(y) = \sup_{z\in\S^\D_+}\Ll\{z\cdot y-\xi(z)\Rr\}$ (see \cite[Theorem~1.2]{chen2023self}).

Aside from the fact that both formulae in~\eqref{e.pan.parisi} and~\eqref{e.hj.parisi} describe the limit free energy, the equivalence between the two does not seem trivial. In Section~\ref{s.equiv}, we directly verify that the two formulae are equal by using the fact that
\begin{align*}
    \sP(x) = \inf_{\pi\in\Pi}\sP(\pi,x) = \inf_{\pi\in\Pi(\nabla\sP(x))}\sP(\pi,x)
\end{align*}
which is a consequence of~\eqref{e.parisi_soc},~\eqref{e.sP(x)}, and Theorem~\ref{t}~\eqref{i.t.opt}. Since the actual proof is technical, we explain the simple heuristics in Section~\ref{s.equiv.heuristics}.

\subsection{Related works}
Parisi initially proposed the formula for the limit free energy in the Sherrington--Kirkpatrick (SK) model in \cite{parisi79,parisi80}. Guerra proved the upper bound in \cite{gue03} and Talagrand proved the matching lower bound in \cite{Tpaper}.
Panchenko extended the formula to various settings: the SK model with soft spins \cite{pan05}, the scalar mixed $p$-spin model \cite{panchenko2014parisi,pan}, the multi-species model \cite{pan.multi}, and the mixed $p$-spin model with vector spins \cite{pan.potts,pan.vec}.
Mourrat's interpretation of the Parisi formula as the Hopf--Lax formula for a Hamilton--Jacobi equation \cite{mourrat2019parisi} led to the extension of the formula to enriched models \cite{mourrat2020extending}.
In the case of spherical spins, the Parisi formula has been established for the SK model \cite{tal.sph}, the mixed $p$-spin model \cite{chen2013aizenman}, and the multi-species model \cite{bates2022free}.

The idea of adding the self-overlap correction first appeared in the Hamilton--Jacobi approach to spin glasses by Mourrat \cite{mourrat2019parisi,mourrat2020extending,mourrat2020nonconvex,mourrat2020free}. With this insight, \cite{chen2023self} revisited vector spin glasses. For more detail on the Hamilton--Jacobi equation approach to statistical mechanics, we refer to~\cite{HJbook}.

Our argument for Theorem~\ref{t} is a straightforward adaption of that for the generic spin glass model presented in \cite[Section~3.7]{pan}.
In particular, Theorem~\ref{t}~\eqref{i.t.conc_self-overlap} follows from the same approach to obtaining the Ghirlanda--Guerra identities for generic spin glasses models \cite{panchenko2010ghirlanda}. 
We will show that the differentiability of $\sP$ and the concentration of the free energy yield the concentration of the self-overlap. 
This is also reminiscent of \cite{chatterjee2009ghirlanda} which proves that the Ghirlanda--Guerra identities hold at differentiable points.

Similar to the method of adding perturbation to ensure the Ghirlanda--Guerra identities \cite{ghirlanda1998general,talagrand2010construction}, one can introduce extra perturbation to ensure the concentration of the self-overlap. This technique first appeared in \cite{mourrat2020nonconvex,mourrat2020free} (see also \cite[Proposition~3.1]{chen2023self}). Our main result shows that the extra perturbation is in fact not needed in spin glasses with self-overlap correction. 

\subsection{Acknowledgements}
The author thanks Chokri Manai and Simone Warzel for discussions which led to this project. The author is grateful to Simone Warzel for hosting a visit where these discussions took place. The author is grateful to Erik Bates from whom the author learned the idea to determine the self-overlap by using symmetry in the Potts spin glass. This project has received funding from the European Research Council (ERC) under the European Union’s Horizon 2020 research and innovation programme (grant agreement No.\ 757296).

\section{Proofs of main results}\label{s.pfs}

In the following, we denote functions $x\mapsto F_N(x)$ and $x\mapsto \sP(x)$ by $F_N$ and $\sP$, respectively.

\subsection{Preliminaries}

In preparation, we introduce the Hamiltonian with perturbation that appears in the cavity computation via the Aizenman--Sims--Starr scheme. This is needed in the proof of Proposition~\ref{p.inf_Pi(nabal_sP)}.

For each $N$, let $(\tilde H_N(\sigma))_{\sigma\in \R^{D\times N}}$ be a centered real-valued Gaussian process with covariance
\begin{align*}\E\tilde H_N(\sigma)\tilde H_N(\sigma')= (N+1)\xi\Ll(\frac{\sigma\sigma'^\intercal}{N+1}\Rr),\quad\forall \sigma,\,\sigma'\in\R^{\D\times N}.
\end{align*}
This is the Hamiltonian appearing in the cavity computation. Since we do not need the exact expression of the perturbation, we refer to \cite[(3.2)]{chen2023self} for the precise definition. We clarify that $x$ and $x^N$ in \cite{chen2023self} denote perturbation parameters, which should not be confused with $x$ here. To avoid confusion, we denote by $\bx$ the perturbation parameter and by $(\bx^N)_{N\in\N}$ a sequence of perturbation parameters. Each perturbation parameter lives in $[0,3]^{\mathbf{N}}$ where $\mathbf{N}$ is a countably infinite set. Let $H^{\pert,\bx}_N(\sigma)$ be the perturbation Hamiltonian given in \cite[(3.2)]{chen2023self}.

For each $N,\, x,\, \bx$, we set
\begin{align*}
    \tilde H^\bx_N(\sigma) &= \tilde H_N(\sigma) -\frac{1}{2}(N+1)\xi\Ll(\frac{\sigma\sigma^\intercal}{N+1}\Rr)+ N^{-\frac{1}{16}} H^{\pert,\bx}_N(\sigma) ,
    \\
    \tilde F^\bx_N(x) &= \frac{1}{N}\E\log\int\exp\Ll(\tilde H^\bx_N(\sigma)+x\cdot\sigma\sigma^\intercal\Rr)\d P_N(\sigma),
    \\
    \la\cdot\ra_{x,\,\bx} &\propto \exp\Ll(\tilde H^\bx_N(\sigma)+x\cdot\sigma\sigma^\intercal\Rr)\d P_N(\sigma).
\end{align*}
Using a standard interpolation argument, one can show that, for each $x$,
\begin{align}\label{e.F-tildeF}
    \lim_{N\to\infty} \sup_{\bx\in [0,3]^\mathbf{N}}\Ll|F_N(x) - \tilde F_N^\bx(x)\Rr| =0.
\end{align}
In the following, we fix an arbitrary sequence $(\bx^N)_{N\in\N}$ of perturbation parameters. We will only specify this sequence in the proof of Proposition~\ref{p.inf_Pi(nabal_sP)}. For each $x$,~\eqref{e.F-tildeF} implies that $F_N(x)$ and $\tilde F^{\bx^N}_N(x)$ converges to the same limit.
We denote by $\tilde F^{\bx^N}_N$ the function $x\mapsto \tilde F^{\bx^N}_N(x)$.

\begin{lemma}\label{l.prop_F_N}
For every $N$, $F_N$ and $\tilde F^{\bx^N}_N$ are Lipschitz and convex. Moreover,
\begin{align*}
    \sup_{N\in\N} \|F_N\|_\mathrm{Lip},\ \sup_{N\in\N} \Ll\|\tilde F^{\bx^N}_N\Rr\|_\mathrm{Lip}\leq 1.
\end{align*}
Consequently, $\sP$ is Lipschitz and convex.
\end{lemma}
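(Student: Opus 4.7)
The plan is to prove convexity and the Lipschitz bound directly from the explicit integral representations of $F_N$ and $\tilde F_N^{\bx^N}$, and then transfer both properties to $\sP$ through the limit identity~\eqref{e.parisi_soc}.

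For convexity, note that both $F_N(x)$ and $\tilde F_N^{\bx^N}(x)$ have the form $x\mapsto \frac{1}{N}\E\log\int e^{x\cdot \sigma\sigma^\intercal}\,\d\mu_N(\sigma)$ for a positive (random) measure $\mu_N$ on $\R^{\D\times N}$ that does not depend on $x$. Since $x\cdot \sigma\sigma^\intercal$ is linear in $x$, the standard cumulant-generating-function argument (differentiate twice and recognize a variance, or invoke H\"older's inequality) shows this function is convex in $x$, and convexity survives the expectation $\E$.

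For the Lipschitz estimate, I differentiate in $x\in\S^\D$ with respect to the entry-wise inner product to get
\begin{align*}
    \nabla F_N(x) = \frac{1}{N}\E\Ll\la \sigma\sigma^\intercal\Rr\ra_x, \qquad \nabla \tilde F_N^{\bx^N}(x)=\frac{1}{N}\E\Ll\la \sigma\sigma^\intercal\Rr\ra_{x,\bx^N},
\end{align*}
both of which lie in $\S^\D$. The key input is that $P_1$ is supported in the unit ball of $\R^\D$, so $|\sigma_{\cdot,i}|\le 1$ a.s. Using Cauchy--Schwarz entry-wise,
\begin{align*}
    \Ll|\sigma\sigma^\intercal\Rr|^2 = \sum_{k,l}\Ll(\sum_i\sigma_{k,i}\sigma_{l,i}\Rr)^2 \leq \sum_{k,l}\Ll(\sum_i\sigma_{k,i}^2\Rr)\Ll(\sum_i\sigma_{l,i}^2\Rr) = \Ll(\sum_i|\sigma_{\cdot,i}|^2\Rr)^2\leq N^2,
\end{align*}
so $\bigl|\tfrac{1}{N}\sigma\sigma^\intercal\bigr|\leq 1$ pointwise. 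Jensen's inequality for $|\cdot|$ applied to the Gibbs average and to $\E$ then yields $|\nabla F_N(x)|\le 1$ and $|\nabla \tilde F_N^{\bx^N}(x)|\le 1$ uniformly in $N$ and $x$, giving the claimed bounds on the Lipschitz seminorms.

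Finally, for $\sP$: by~\eqref{e.parisi_soc}, $\sP(x)=\lim_{N\to\infty}F_N(x)$ pointwise on $\S^\D$. Convex functions are stable under pointwise limits, so $\sP$ is convex. The uniform Lipschitz bound $\|F_N\|_{\mathrm{Lip}}\le 1$ likewise passes to the limit, giving $\|\sP\|_{\mathrm{Lip}}\le 1$.

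I do not anticipate a significant obstacle here; the only mild subtlety is the norm bound on $\sigma\sigma^\intercal$, which is where the unit-ball support of $P_1$ enters in a clean way via the Cauchy--Schwarz computation above. Everything else is routine convex analysis combined with the already-established formula~\eqref{e.parisi_soc}.
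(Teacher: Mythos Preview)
Your proposal is correct and follows essentially the same approach as the paper: compute the gradient as the Gibbs average of $\tfrac{1}{N}\sigma\sigma^\intercal$, bound its norm by $1$ using that $P_1$ is supported in the unit ball, obtain convexity from the variance/CGF structure, and pass both properties to $\sP$ via the pointwise limit~\eqref{e.parisi_soc}. The only cosmetic difference is that you bound $|\sigma\sigma^\intercal|$ via Cauchy--Schwarz whereas the paper uses the triangle inequality $|\sigma\sigma^\intercal|\le \sum_i|\sigma_{\cdot,i}\sigma_{\cdot,i}^\intercal|$; both yield the same estimate.
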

\begin{proof}
Fix any $x,\, y \in \S^\D$, for $r\in[0,1]$, we can compute
\begin{align}\label{e.dF_N(x+ry)}
    \frac{\d}{\d r}F_N(x+ry) = \E \la y\cdot \frac{\sigma\sigma^\intercal}{N}\ra_{x+ry}.
\end{align}
By the assumption on the support of $P_1$, we have that
\begin{align}\label{e.|sigmasigma|<N}
    \Ll|\frac{\sigma\sigma^\intercal}{N}\Rr|\leq \frac{1}{N}\sum_{i=1}^N \Ll|\sigma_{\cdot,i}\sigma_{\cdot,i}^\intercal\Rr|\leq 1,\quad
    \text{a.s. under $P_N$.}
\end{align}
Hence, we have $\Ll|\frac{\d}{\d r}F_N(x+ry)\Rr|\leq |y|$ for every $r$ and thus $|F_N(x+y)-F_N(y)|\leq |y|$. This verifies the Lipschitzness uniform in $N$. 

We differentiate one more time to get
\begin{align}
    \frac{\d^2}{\d r^2}F_N(x+ry) & = N\E \la \Ll(y\cdot \frac{\sigma\sigma^\intercal}{N}\Rr)^2 -\Ll(y\cdot \frac{\sigma\sigma^\intercal}{N}\Rr)\Ll(y\cdot \frac{\sigma'\sigma'^\intercal}{N}\Rr) \ra_{x+ry}\notag
    \\
    & = N\E \la \Ll(y\cdot \frac{\sigma\sigma^\intercal}{N}\Rr)^2 -\la y\cdot \frac{\sigma\sigma^\intercal}{N}\ra_{x+ry}^2 \ra_{x+ry}.\label{e.d^2F_N}
\end{align}
Since the right-hand side is nonnegative, we have verified that $F_N$ is convex. The same works for $\tilde F^{\bx^N}_N$.

Lastly, since $F_N$ converges to $\sP$ pointwise due to~\eqref{e.parisi_soc}, we conclude that $\sP$ is also Lipschitz and convex.
\end{proof}

We note that the computation~\eqref{e.dF_N(x+ry)} implies that, for every $x\in\S^\D$,
\begin{align}\label{e.nabla_F=self-overlap}
    \nabla F_N(x) = \E \la \frac{\sigma\sigma^\intercal}{N} \ra_x,\qquad \nabla \tilde F^{\bx^N}_N(x) = \E \la \frac{\sigma\sigma^\intercal}{N} \ra_{x,\,\bx^N}.
\end{align}

\begin{lemma}\label{l.der_P}
For each $\pi$, $\sP(\pi,\cdot)$ is twice differentiable and
\begin{align*}
    \sup_{\substack{\pi,\, x\, \\ |y|\leq 1}}\sum_{i=1}^2\Ll|\frac{\d^j}{\d r^j}\sP(\pi,x+ry)\Big|_{r=0}\Rr|\leq 3.
\end{align*}
\end{lemma}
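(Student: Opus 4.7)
The plan is to mirror the computation carried out for $F_N$ in Lemma~\ref{l.prop_F_N}, but applied to the Parisi functional. First, I would observe that the additive term $\frac{1}{2}\int_0^1\theta(\pi(s))\,\d s$ in~\eqref{e.sP(pi,x)} is independent of $x$, so it plays no role in the derivatives. For the remaining part, I would introduce the auxiliary Gibbs measure on pairs $(\alpha,\tau)$ with density proportional to
\begin{align*}
\exp\Ll(w^{\nabla\xi\circ\pi}(\alpha)\cdot \tau - \tfrac{1}{2}\nabla\xi\circ\pi(1)\cdot \tau\tau^\intercal+x\cdot\tau\tau^\intercal\Rr)\,\d P_1(\tau)\,\d \mathfrak{R}(\alpha),
\end{align*}
which I denote by $\la\cdot\ra^\circ_{\pi,x}$; the dependence on the Gaussian randomness of $w^{\nabla\xi\circ\pi}$ and on $\mathfrak{R}$ is kept implicit.

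Next I would differentiate the random inner integral in $r$, interchange the derivative with the outer expectation $\E$, and obtain
\begin{align*}
\frac{\d}{\d r}\sP(\pi,x+ry) = \E\la y\cdot \tau\tau^\intercal\ra^\circ_{\pi,x+ry}.
\end{align*}
A second differentiation produces the usual variance formula, in direct analogy with~\eqref{e.d^2F_N}:
\begin{align*}
\frac{\d^2}{\d r^2}\sP(\pi,x+ry) = \E\Ll(\la (y\cdot \tau\tau^\intercal)^2\ra^\circ_{\pi,x+ry} - \la y\cdot \tau\tau^\intercal\ra^{\circ \, 2}_{\pi,x+ry}\Rr).
\end{align*}

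The quantitative bounds then follow immediately from the support assumption on $P_1$: since $\tau$ lies in the unit ball of $\R^\D$, we have $|\tau\tau^\intercal|\leq |\tau|^2\leq 1$ almost surely, hence $|y\cdot\tau\tau^\intercal|\leq |y|$. This gives $|\d_r\sP(\pi,x+ry)|\leq |y|$ and, from the variance expression, $|\d_r^2\sP(\pi,x+ry)|\leq |y|^2$. For $|y|\leq 1$ the sum is at most $2$, which is within the claimed bound $3$.

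The only step that requires care is the justification of interchanging differentiation and the expectation $\E$, since $\E$ integrates both the Gaussian field $w^{\nabla\xi\circ\pi}$ and the cascade $\mathfrak{R}$. However, the pointwise derivatives of the integrand are uniformly bounded by $|y|$ and $|y|^2$ on any bounded interval of $r$, so dominated convergence applies without difficulty. I do not foresee any essential obstacle here; the proof is a direct differentiation together with the $|\tau\tau^\intercal|\leq 1$ estimate analogous to~\eqref{e.|sigmasigma|<N}.
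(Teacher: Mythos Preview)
Your proposal is correct and follows precisely the approach sketched in the paper: the derivatives of $\sP(\pi,\cdot)$ have the same form as~\eqref{e.dF_N(x+ry)} and~\eqref{e.d^2F_N} with $N=1$ and the Gibbs measure replaced by the one associated with the cascade functional, and the bound $|\tau\tau^\intercal|\leq 1$ (the analogue of~\eqref{e.|sigmasigma|<N}) gives the result. You have simply written out in full what the paper leaves to the reader.
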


\begin{proof}
The derivatives of $\sP(\pi,\cdot)$ have the same forms as in \eqref{e.dF_N(x+ry)} and \eqref{e.d^2F_N} but with $N=1$ and a different Gibbs measure. Again, using \eqref{e.|sigmasigma|<N}, we can get the desired estimate.
\end{proof}

\subsection{Differentiability}

\begin{proposition}\label{p.P_diff}
If $\xi$ is convex, then $\sP$ is differentiable everywhere on $\S^\D$ and
\begin{align*}
    \lim_{N\to\infty}\nabla F_N(x) = \lim_{N\to\infty} \nabla\tilde F^{\bx^N}_N (x)= \nabla \sP(x),\quad\forall x\in \S^\D.
\end{align*}
\end{proposition}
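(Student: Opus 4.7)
The plan is to combine the fact that each $\sP(\pi,\cdot)$ is uniformly $C^{1,1}$ (a consequence of Lemma~\ref{l.der_P}) with the convexity of $\sP$ to force the subgradient of $\sP$ at every point to be a singleton. Once differentiability is established, convergence of $\nabla F_N$ and $\nabla \tilde F^{\bx^N}_N$ to $\nabla \sP$ is a standard convex-analysis fact (Griffiths-type lemma), since both $F_N$ and $\tilde F^{\bx^N}_N$ are convex and converge to $\sP$ pointwise by~\eqref{e.parisi_soc} and~\eqref{e.F-tildeF}.

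For the differentiability, fix any $x\in \S^\D$. From Lemma~\ref{l.der_P}, each $\sP(\pi,\cdot)$ satisfies a uniform one-sided quadratic bound of the form
\begin{align*}
    \sP(\pi,y) \le \sP(\pi,x) + \nabla_x \sP(\pi,x)\cdot (y-x) + \tfrac{3}{2}|y-x|^2,\quad \forall y\in \S^\D,
\end{align*}
(obtained by integrating the directional second derivative bound) and $|\nabla_x \sP(\pi,x)|\le 3$. Choose a minimizing sequence $\pi_n$ with $\sP(\pi_n,x)\to \sP(x)$; since $\nabla_x \sP(\pi_n,x)$ stays in a bounded set of $\S^\D$, extract a subsequence along which it converges to some $g\in\S^\D$. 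Using $\sP(y)\le \sP(\pi_n,y)$ and passing to the limit along this subsequence yields
\begin{align*}
    \sP(y) \le \sP(x) + g\cdot (y-x) + \tfrac{3}{2}|y-x|^2,\quad\forall y\in \S^\D.
\end{align*}
By Lemma~\ref{l.prop_F_N}, $\sP$ is convex, so any subgradient $h$ of $\sP$ at $x$ satisfies $\sP(y)\ge \sP(x)+h\cdot (y-x)$. Combining these inequalities, setting $y=x+\eps v$ with $|v|=1$ and letting $\eps\to 0^+$, we get $h\cdot v\le g\cdot v$ for every unit $v$, hence $h=g$. Thus the subdifferential of $\sP$ at $x$ is the singleton $\{g\}$, which proves that $\sP$ is differentiable at $x$ with $\nabla\sP(x)=g$. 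As a by-product, every convergent subsequence of $\nabla_x \sP(\pi_n,x)$ has the same limit $\nabla\sP(x)$.

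For the convergence of gradients, I would argue by contradiction. Suppose $\nabla F_N(x)\not\to \nabla\sP(x)$ along some subsequence; since $\|\nabla F_N\|_\infty\le 1$ by Lemma~\ref{l.prop_F_N}, after passing to a further subsequence we may assume $\nabla F_{N_k}(x)\to g'\ne \nabla \sP(x)$. Convexity of $F_{N_k}$ gives $F_{N_k}(y)\ge F_{N_k}(x)+\nabla F_{N_k}(x)\cdot(y-x)$, and taking $k\to\infty$ together with pointwise convergence $F_N\to\sP$ yields $\sP(y)\ge \sP(x)+g'\cdot(y-x)$ for all $y$, so $g'$ is a subgradient of $\sP$ at $x$. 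The differentiability proved above forces $g'=\nabla\sP(x)$, a contradiction. The same argument applied to $\tilde F^{\bx^N}_N$, which is also convex and converges to $\sP$ pointwise by~\eqref{e.F-tildeF}, gives $\nabla \tilde F^{\bx^N}_N(x)\to \nabla \sP(x)$.

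The main subtle step is the uniqueness of the subdifferential at \emph{every} point: it is what upgrades the almost-everywhere differentiability of a convex function on $\S^\D$ to differentiability everywhere, and it is exactly at this point that the uniform $C^{1,1}$ control from Lemma~\ref{l.der_P} is used, together with the infimum (not supremum) structure of the Parisi formula~\eqref{e.parisi_soc}. The rest is routine convex analysis.
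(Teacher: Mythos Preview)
Your proposal is correct and follows essentially the same approach as the paper: both arguments exploit the infimum structure of $\sP(x)=\inf_\pi\sP(\pi,x)$ together with the uniform second-derivative bound from Lemma~\ref{l.der_P} to pin down the subdifferential of $\sP$ at every point, and then deduce convergence of $\nabla F_N$ and $\nabla\tilde F^{\bx^N}_N$ from convexity and pointwise convergence. The only cosmetic difference is that you extract a limit point $g$ of $\nabla_x\sP(\pi_n,x)$ by compactness and show every subgradient equals $g$, whereas the paper bounds $|y\cdot a - y\cdot\nabla\sP(\pi_n,x)|\le Cr+\tfrac{1}{nr}$ directly and sets $r=n^{-1/2}$; likewise, for the gradient convergence you use a subsequence/contradiction (Griffiths-type) argument while the paper sandwiches $y\cdot\nabla F_N(x)$ between difference quotients---but the underlying mechanism is identical.
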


By \cite[Theorem~25.5]{rockafellar1970convex}, since $\sP$ is convex and differentiable everywhere, $\sP$ is in fact continuously differentiable.

\begin{proof}
Fix any $x \in \S^\D$. Since $\sP$ is convex by Lemma~\ref{l.prop_F_N}, it suffices to show that any subdifferential $a$ of $\sP$ at $x$ is unique. 
For each $n$, we choose $\pi_n$ to satisfy
\begin{align}\label{e.P(pi_n,x)<P(x)+1/n}
    \sP(\pi_n,x) \leq \sP(x) +\frac{1}{n}.
\end{align}
Fix any $y\in \S^\D$ and let $r\in (0,1]$.
Using the definition of subdifferential, the fact that $\sP$ is an infimum, and \eqref{e.P(pi_n,x)<P(x)+1/n}, we have that
\begin{align*}
    y\cdot a &\leq \frac{\sP(x+ry) - \sP(x)}{r} \leq \frac{\sP(\pi_n,x+ry)-\sP(\pi_n,x)+\frac{1}{n}}{r},
    \\
    y\cdot a &\geq \frac{\sP(x) - \sP(x-ry)}{r} \geq \frac{\sP(\pi_n,x)-\sP(\pi_n,x-ry)-\frac{1}{n}}{r}.
\end{align*}
Using the Taylor expansion of $\sP(\pi_n,\cdot)$ at $x$ and the uniform bound on the second-order derivatives in Lemma~\ref{l.der_P}, we obtain from the above that
\begin{align*}
    \Ll|y\cdot a - y\cdot \nabla\sP(\pi_n,x)\Rr| \leq C r+\frac{1}{nr}
\end{align*}
for some constant $C$ independent of $r$ and $n$. Setting $r= n^{-\frac{1}{2}}$ and sending $n\to\infty$, we can see that $y\cdot a$ is uniquely determined. Since $y$ is arbitrary, we conclude that the subdifferential of $\sP$ at $x$ is unique and thus $\sP$ is differentiable at $x$.

Then, we show the second part.
Let $y$ and $r$ be taken similarly.
The convexity of $F_N$ in Lemma~\ref{l.prop_F_N} implies that
\begin{align*}
    \frac{F_N(x)-F_N(x-ry)}{r} \leq y\cdot\nabla F_N(x) \leq \frac{F_N(x+ry)-F_N(x)}{r}.
\end{align*}
Sending $N\to\infty$ and then $r\to0$, from~\eqref{e.parisi_soc} and the differentiability of $\sP$, we deduce
\begin{align*}
    \lim_{N\to\infty} y\cdot\nabla F_N(x) = y\cdot\nabla \sP(x).
\end{align*}
Since $y$ is arbitrary, we conclude the desired convergence of $\nabla F_N(x)$. The same argument works for $\tilde F^{\bx^N}_N$.
\end{proof}

\begin{proposition}\label{p.so_concent}
If $\xi$ is convex on $\S^D_+$, then, for each $x\in\S^\D$,
\begin{align*}
    \lim_{N\to\infty}\E \la \Ll|\frac{\sigma\sigma^\intercal}{N} - \E \la \frac{\sigma\sigma^\intercal}{N}\ra\Rr|  \ra =0
\end{align*}
holds for $\la \cdot\ra=\la\cdot\ra_x$ and $\la\cdot\ra=\la\cdot\ra_{x,\,\bx^N}$.
\end{proposition}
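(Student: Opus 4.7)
The plan is to establish the stronger statement that, under $\E\la\cdot\ra_x$, the random matrix $\sigma\sigma^\intercal/N$ converges in distribution to the Dirac mass at $\nabla\sP(x)$. Because $|\sigma\sigma^\intercal/N|\leq 1$ by~\eqref{e.|sigmasigma|<N}, this weak convergence together with the compact support will yield $\E\la|\sigma\sigma^\intercal/N-\nabla\sP(x)|\ra_x\to0$; combined with $\E\la\sigma\sigma^\intercal/N\ra_x=\nabla F_N(x)\to\nabla\sP(x)$ (from~\eqref{e.nabla_F=self-overlap} and Proposition~\ref{p.P_diff}) and the triangle inequality, this delivers the statement for $\la\cdot\ra_x$. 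To prove distributional convergence I will compute moment generating functions.

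Let $\hat F_N(x):=\frac{1}{N}\log\int\exp\Ll(H_N(\sigma)-\frac{N}{2}\xi(\sigma\sigma^\intercal/N)+x\cdot\sigma\sigma^\intercal\Rr)\d P_N(\sigma)$ denote the quenched free energy, so $F_N=\E\hat F_N$. Absorbing $y\cdot\sigma\sigma^\intercal/N=(y/N)\cdot\sigma\sigma^\intercal$ into the external field gives the identity
\begin{align*}
\la\exp(y\cdot\sigma\sigma^\intercal/N)\ra_{x}=\exp\Ll(N[\hat F_N(x+y/N)-\hat F_N(x)]\Rr),\quad\forall y\in\S^\D.
\end{align*}
The computation in Lemma~\ref{l.prop_F_N} applies pointwise in the disorder and yields $|\hat F_N(x')-\hat F_N(x'')|\leq|x'-x''|$, so $Z_N:=N[\hat F_N(x+y/N)-\hat F_N(x)]$ satisfies $|Z_N|\leq|y|$. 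Using convexity of $\hat F_N$ in $x$, I will sandwich
\begin{align*}
\partial_y\hat F_N(x)\leq Z_N\leq \partial_y\hat F_N(x+y/N).
\end{align*}
Gaussian concentration of the log-partition function gives $\hat F_N(x')\to\sP(x')$ in probability at each fixed $x'\in\S^\D$, and running the same convex-function argument as in the proof of Proposition~\ref{p.P_diff} pathwise yields $\partial_y\hat F_N(x)\to y\cdot\nabla\sP(x)$ in probability. Dominated convergence, using $|Z_N|\leq|y|$, then gives
\begin{align*}
\E\la\exp(y\cdot\sigma\sigma^\intercal/N)\ra_x\to\exp(y\cdot\nabla\sP(x)),\quad\forall y\in\S^\D.
\end{align*}
Since $\sigma\sigma^\intercal/N$ is uniformly bounded, MGF convergence implies weak convergence, and in fact convergence of all moments, to the Dirac mass at $\nabla\sP(x)$. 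The perturbed case $\la\cdot\ra_{x,\bx^N}$ is handled identically by replacing $\hat F_N$ with the quenched analog of $\tilde F^{\bx^N}_N$; convexity and the gradient limit are supplied directly by Lemma~\ref{l.prop_F_N} and Proposition~\ref{p.P_diff}, and the analogous MGF identity holds because the perturbation Hamiltonian does not depend on $x$.

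The main obstacle will be the right endpoint $\partial_y\hat F_N(x+y/N)$ of the sandwich, whose base point moves with $N$, so the convex-function argument does not apply verbatim. I plan to handle this by re-sandwiching at the fixed nearby point $x+\delta y$,
\begin{align*}
\partial_y\hat F_N(x+y/N)\leq\frac{\hat F_N(x+y/N+\delta y)-\hat F_N(x+y/N)}{\delta}\leq\frac{\hat F_N(x+\delta y)-\hat F_N(x)}{\delta}+\frac{2|y|}{N\delta},
\end{align*}
where the last inequality absorbs the $O(1/N)$ shift of the base point via the uniform Lipschitz bound. Letting $N\to\infty$ first (which sends the error term to zero and replaces the first difference by $\frac{\sP(x+\delta y)-\sP(x)}{\delta}$ in probability) and then $\delta\to0$ yields $\limsup_N \partial_y\hat F_N(x+y/N)\leq y\cdot\nabla\sP(x)$ in probability, and a symmetric argument supplies the matching lower bound.
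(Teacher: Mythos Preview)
Your proof is correct and takes a genuinely different route from the paper's. The paper follows the classical Ghirlanda--Guerra style argument (cf.\ \cite[Section~3.7]{pan}): fix a direction $y\in\S^\D$, set $g(\sigma)=y\cdot\sigma\sigma^\intercal$, and split
\[
\E\la|g-\E\la g\ra|\ra\leq\E\la|g-\la g\ra|\ra+\E|\la g\ra-\E\la g\ra|.
\]
The first term (thermal fluctuation) is controlled by an integration-by-parts trick that bounds it in terms of $\int_0^r\frac{\d^2}{\d s^2}F_N(x+sy)\,\d s=y\cdot\nabla F_N(x+ry)-y\cdot\nabla F_N(x)$, which vanishes as $r\to0$ by the differentiability of $\sP$; the second term (disorder fluctuation) is handled by Gaussian concentration of the quenched free energy combined with the same convexity argument. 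Your approach bypasses this split entirely: the moment-generating-function identity $\la e^{y\cdot\sigma\sigma^\intercal/N}\ra_x=\exp\bigl(N[\hat F_N(x+y/N)-\hat F_N(x)]\bigr)$ reduces the full concentration statement to showing that a single difference quotient of the \emph{quenched} free energy converges in probability to $y\cdot\nabla\sP(x)$, which follows from Gaussian concentration of $\hat F_N$ at finitely many fixed points plus the differentiability of $\sP$. Your argument is shorter and arguably more transparent; the paper's approach, being the traditional one, ties in directly with the literature on the Ghirlanda--Guerra identities and would adapt to observables (such as functions of the off-diagonal overlap) for which no exact MGF-to-free-energy identity is available. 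One minor simplification of your write-up: since difference quotients of convex functions are monotone in the increment, you may bound $Z_N\leq(\hat F_N(x+\delta y)-\hat F_N(x))/\delta$ directly for all $N>1/\delta$, which avoids the detour through $\partial_y\hat F_N(x+y/N)$ and the Lipschitz correction $2|y|/(N\delta)$.
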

\begin{proof}
We only demonstrate the argument for $\la\cdot\ra=\la\cdot\ra_x$ and the other case follows verbatim. 

Fix any $y\in \S^\D$ and set $g(\sigma) = y\cdot \sigma\sigma^\intercal$. Since $y$ is arbitrary, it suffices to show that
\begin{align}\label{e.E<|g-E<g>|>=0}
    \lim_{N\to\infty} \frac{1}{N}\E \la \Ll|g(\sigma) - \E \la g(\sigma)\ra_x\Rr|  \ra_x =0.
\end{align}
First, we show that
\begin{align}\label{e.E<|g-<g>|>=0}
    \lim_{N\to\infty} \frac{1}{N}\E \la \Ll|g(\sigma) - \la g(\sigma)\ra_x\Rr|  \ra_x =0.
\end{align}
We denote by $(\sigma^l)_{l\in\N}$ independent copies of $\sigma$ under $\la \cdot\ra_x$. Let $r>0$. Integrating by parts, we have
\begin{align*}
    r\E \la \Ll|g\Ll(\sigma^1\Rr) -g\Ll(\sigma^2\Rr)\Rr|\ra_x = \int_0^r \E \la \Ll|g\Ll(\sigma^1\Rr) -g\Ll(\sigma^2\Rr)\Rr|\ra_{x+sy} \d s
    \\
    -\int_0^r \int_0^t \frac{\d}{\d s} \E \la \Ll|g\Ll(\sigma^1\Rr) -g\Ll(\sigma^2\Rr)\Rr|\ra_{x+sy}\d s \d t.
\end{align*}
We can compute 
\begin{align*}
    \frac{\d}{\d s} \E \la \Ll|g\Ll(\sigma^1\Rr) -g\Ll(\sigma^2\Rr)\Rr|\ra_{x+sy}  =  \E \la \Ll|g\Ll(\sigma^1\Rr) -g\Ll(\sigma^2\Rr)\Rr|\Ll(g\Ll(\sigma^1\Rr)+g\Ll(\sigma^2\Rr)-2g\Ll(\sigma^3\Rr)\Rr)\ra_{x+sy}
    \\
    \geq -2 \E \la \Ll|g\Ll(\sigma^1\Rr) -g\Ll(\sigma^2\Rr)\Rr|^2\ra_{x+sy}\geq -8 \E \la \Ll|g\Ll(\sigma\Rr) -\la g\Ll(\sigma\Rr)\ra_{x+sy}\Rr|^2\ra_{x+sy}.
\end{align*}
Combining the above two displays, we get
\begin{align*}
    \E \la \Ll|g\Ll(\sigma^1\Rr) -g\Ll(\sigma^2\Rr)\Rr|\ra_x &\leq \frac{1}{r}\int_0^r\E \la \Ll|g\Ll(\sigma^1\Rr) -g\Ll(\sigma^2\Rr)\Rr|\ra_{x+sy} \d s
    \\
    &+ \frac{8}{r}\int_0^r \int_0^t \E \la \Ll|g\Ll(\sigma\Rr) -\la g\Ll(\sigma\Rr)\ra_{x+sy}\Rr|^2\ra_{x+sy}\d s \d t
    \\
    &\leq \frac{2}{r}\int_0^r\E \la \Ll|g\Ll(\sigma\Rr) -\la g\Ll(\sigma\Rr)\ra_{x+sy}\Rr|\ra_{x+sy} \d s
    \\
    &+ 8\int_0^r \E \la \Ll|g\Ll(\sigma\Rr) -\la g\Ll(\sigma\Rr)\ra_{x+sy}\Rr|^2\ra_{x+sy}\d s .
\end{align*}
Setting
\begin{align*}
    \eps_N = \frac{1}{N} \int_0^r \E \la \Ll|g\Ll(\sigma\Rr) -\la g\Ll(\sigma\Rr)\ra_{x+sy}\Rr|^2\ra_{x+sy}\d s,
\end{align*}
we thus have
\begin{align*}
    \frac{1}{N}\E \la \Ll|g\Ll(\sigma^1\Rr) -g\Ll(\sigma^2\Rr)\Rr|\ra_x \leq 2 \sqrt{\frac{\eps_N}{rN}}+ 8 \eps_N.
\end{align*}
Note that, by \eqref{e.d^2F_N},
\begin{align*}
    \eps_N & = \int_0^r \frac{\d^2}{\d s^2}F_N(x+sy) \d s =  y\cdot\nabla F_N(x+ry) - y \cdot\nabla F_N(x) 
    \\
    &\leq \frac{F_N(x+(r+t)y) - F_N(x+ry)}{t} - \frac{F_N(x) - F_N(x-t y)}{t}
\end{align*}
for any $t>0$, where the last inequality follows from the convexity of $F_N$.
Therefore, from the above two displays, we obtain
\begin{align*}
    \limsup_{N\to\infty} \frac{1}{8N}\E \la \Ll|g\Ll(\sigma^1\Rr) -g\Ll(\sigma^2\Rr)\Rr|\ra_x \leq \frac{\sP(x+(r+t)y) - \sP(x+ry)}{t} - \frac{\sP(x) - \sP(x-t y)}{t}
\end{align*}
We first send $r\to0$ and then $t\to 0$. By the differentiability of $\sP$, the right-hand side becomes zero. This immediately implies~\eqref{e.E<|g-<g>|>=0} 

Next, we show
\begin{align}\label{e.E|<g>-E<g>|=0}
    \lim_{N\to\infty} \frac{1}{N}\E\Ll| \la g(\sigma)\ra_x - \E\la g(\sigma)\ra_x\Rr|  =0.
\end{align}
We set
\begin{align*}
    \phi_N(x) = \frac{1}{N}\log \int \exp\Ll(H_N(\sigma) - \frac{N}{2}\xi\Ll(\frac{\sigma\sigma^\intercal}{N}\Rr) + x\cdot \sigma\sigma^\intercal\Rr) \d P_N(\sigma).
\end{align*}
Hence, $\E \phi_N(x) =F_N(x)$ and
\begin{align}\label{e.1/NE|g-g|=...}
     \frac{1}{N}\E\Ll| \la g(\sigma)\ra_x - \E\la g(\sigma)\ra_x\Rr| = \E \Ll| y\cdot\nabla \phi_N(x) - y\cdot\nabla F_N(x)\Rr|.
\end{align}
For $r>0$, we set
\begin{align*}
    \delta_N(r) = \Ll|\phi_N(x-ry)-F_N(x-ry)\Rr| + \Ll|\phi_N(x)-F_N(x)\Rr| + \Ll|\phi_N(x+ry)-F_N(x+ry)\Rr|.
\end{align*}
We can compute that the second-order derivative of $\phi_N$ is equal to~\eqref{e.d^2F_N} without $\E$, which is still nonnegative. Hence, $\phi_N$ is convex and we have
that $y\cdot\nabla \phi_N(x) - y\cdot\nabla F_N(x)$ is bounded from above by
\begin{align*}
    \frac{\phi_N(x+ry) - \phi_N(x)}{r} - y\cdot \nabla F_N(x) \leq \frac{F_N(x+ry)-F_N(x)}{r}-y\cdot \nabla F_N(x)+ \frac{\delta_N(r)}{r}
\end{align*}
and from below by
\begin{align*}
    \frac{\phi_N(x) - \phi_N(x-ry)}{r} - y\cdot \nabla F_N(x) \geq \frac{F_N(x)-F_N(x-ry)}{r}-y\cdot \nabla F_N(x)- \frac{\delta_N(r)}{r}.
\end{align*}
By the standard concentration result (e.g.\ see \cite[Theorem~1.2]{pan} or \cite[Theorem~4.7]{HJbook}), there is a constant $C>0$ such that $\sup_{r\in[0,1]}\E \delta_N(r)\leq CN^{-\frac{1}{2}}$.
Using this, \eqref{e.parisi_soc}, and the convergence of $\nabla F_N$ proved in Proposition~\ref{p.P_diff}, we get
\begin{align*}
    \limsup_{N\to\infty} \E \Ll|y\cdot\nabla \phi_N(x) - y\cdot\nabla F_N(x)\Rr| 
    &\leq \Ll|\frac{\sP(x+ry)-\sP(x)}{r}-y\cdot \nabla \sP(x)\Rr| 
    \\
    &+ \Ll|\frac{\sP(x)-\sP(x-ry)}{r}-y\cdot \nabla \sP(x)\Rr|.
\end{align*}
Now, sending $r\to\infty$ and using \eqref{e.1/NE|g-g|=...}, we arrive at~\eqref{e.E|<g>-E<g>|=0}.

Since~\eqref{e.E<|g-<g>|>=0} and~\eqref{e.E|<g>-E<g>|=0} together yield~\eqref{e.E<|g-E<g>|>=0}, the proof is complete.
\end{proof}

\subsection{Upper bound on the path}

\begin{proposition}\label{p.inf_Pi(nabal_sP)}
    If $\xi$ is convex on $\S^D_+$, then
    \begin{align*}
        \lim_{N\to\infty} F_N(x) = \inf_{\pi\in \Pi(\nabla \sP(x))}\sP(\pi,x),\quad\forall x\in \S^\D.
    \end{align*}
\end{proposition}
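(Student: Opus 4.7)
The lower bound $\lim_{N\to\infty} F_N(x) \le \inf_{\pi\in\Pi(\nabla\sP(x))}\sP(\pi,x)$ is immediate from~\eqref{e.parisi_soc} and the inclusion $\Pi(\nabla\sP(x))\subset\Pi$. The content of the proposition is the reverse inequality; equivalently, I need to exhibit a path $\pi^*\in\Pi(\nabla\sP(x))$ with $\sP(\pi^*,x)\le\lim_{N\to\infty} F_N(x)$.

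My plan is to obtain $\pi^*$ from an Aizenman--Sims--Starr (AS2) cavity computation applied to the perturbed free energy $\tilde F^{\bx^N}_N$, which is precisely why the perturbation Hamiltonian $H^{\pert,\bx}_N$ was introduced in the preliminaries. Concretely, I would choose the perturbation parameters $(\bx^N)_{N\in\N}$ so that, along a subsequence, the Ghirlanda--Guerra identities hold in the limit under $\la\cdot\ra_{x,\bx^N}$. Panchenko's ultrametricity theorem then realizes the asymptotic Gibbs measure as a Ruelle probability cascade, and the asymptotic overlap distribution encodes a path $\pi^*\in\Pi$ for which the cavity field has covariance $\nabla\xi\circ\pi^*$ in the sense of~\eqref{e.cov_w^pi}. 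The AS2 cavity identity, in the form developed for the self-overlap-correction setting in \cite[Section~3]{chen2023self}, then produces
\begin{align*}
    \lim_{N\to\infty} F_N(x) \;=\; \lim_{N\to\infty} \tilde F^{\bx^N}_N(x) \;\ge\; \sP(\pi^*,x),
\end{align*}
where the equality of limits comes from~\eqref{e.F-tildeF}.

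The key identification is that the endpoint $\pi^*(1)$ coincides with the asymptotic self-overlap: the cavity field covariance at coincident replica indices $\alpha\wedge\alpha'=1$ gives
\begin{align*}
    \pi^*(1)=\lim_{N\to\infty}\E\la\sigma\sigma^\intercal/N\ra_{x,\bx^N}.
\end{align*}
By~\eqref{e.nabla_F=self-overlap} and the gradient convergence $\nabla\tilde F^{\bx^N}_N(x)\to\nabla\sP(x)$ from Proposition~\ref{p.P_diff}, this limit equals $\nabla\sP(x)$, and therefore $\pi^*\in\Pi(\nabla\sP(x))$. Chaining with the AS2 display,
\begin{align*}
    \inf_{\pi\in\Pi(\nabla\sP(x))}\sP(\pi,x) \;\le\; \sP(\pi^*,x) \;\le\; \lim_{N\to\infty} F_N(x),
\end{align*}
which together with the trivial lower bound yields the stated equality.

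The main obstacle is the AS2 cavity computation itself, namely (i) selecting $(\bx^N)_{N\in\N}$ along a subsequence so as to force the Ghirlanda--Guerra identities for $\la\cdot\ra_{x,\bx^N}$; (ii) extracting an asymptotic Gibbs measure in the sense of \cite[Chapter~2]{pan} and verifying its ultrametric RPC structure; and (iii) checking that the asymptotics of the cavity and perturbation Hamiltonians reproduce the Parisi functional~\eqref{e.sP(pi,x)} evaluated at the path $\pi^*$ extracted from the RPC. These ingredients are already available in the vector spin glass literature, in particular \cite{chen2023self,pan.vec}; the only ingredient specific to the present proposition is the endpoint identification $\pi^*(1)=\lim\E\la\sigma\sigma^\intercal/N\ra_{x,\bx^N}$, which combined with Proposition~\ref{p.P_diff} forces $\pi^*\in\Pi(\nabla\sP(x))$ and is exactly the reason no extra perturbation (beyond AS2) is needed in the self-overlap-correction setting.
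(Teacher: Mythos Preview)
Your overall strategy matches the paper's: the upper bound is trivial from~\eqref{e.parisi_soc}, and the lower bound comes from the AS2 cavity computation of \cite[Proposition~4.1]{chen2023self}, with the new ingredient being that the extracted path has endpoint $\nabla\sP(x)$. However, your justification of the endpoint identification has a gap.

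You write that ``the cavity field covariance at coincident replica indices $\alpha\wedge\alpha'=1$ gives $\pi^*(1)=\lim_{N\to\infty}\E\la\sigma\sigma^\intercal/N\ra_{x,\bx^N}$'' and then invoke only Proposition~\ref{p.P_diff}. But Proposition~\ref{p.P_diff} gives convergence of the \emph{averaged} self-overlap; it does not by itself force the limiting diagonal array entries $R^{1,1}_\infty$ to be deterministic. In the paper's argument the path is $\Psi\circ\zeta$, where $\Psi$ is Panchenko's synchronization map and $\zeta$ is the quantile function of $\tr(R^{1,2}_\infty)$; pinning the endpoint requires two facts that both rely on the concentration result Proposition~\ref{p.so_concent}: (i) $R^{1,2}_\infty\leqpsd\nabla\sP(x)$ a.s.\ (via $R^{1,2}_N\leqpsd\tfrac12(R^{1,1}_N+R^{2,2}_N)$ and concentration of the right side), so that one may take $\zeta_m(1)=\tr(\nabla\sP(x))$ in the approximation, and (ii) $R^{1,1}_\infty=\nabla\sP(x)$ a.s., so that the diagonal case of synchronization yields $\nabla\sP(x)=\Psi(\tr(\nabla\sP(x)))$ and hence $\Psi\circ\zeta_m\in\Pi(\nabla\sP(x))$. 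Without concentration, $R^{1,1}_\infty$ could be random with mean $\nabla\sP(x)$, and neither step goes through. This is precisely why \cite{chen2023self} needed an extra perturbation to force self-overlap concentration; the point of the present paper is that Proposition~\ref{p.so_concent} supplies it for free, so your final sentence should cite Proposition~\ref{p.so_concent} rather than (or in addition to) Proposition~\ref{p.P_diff}.
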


\begin{proof}
First of all, we verify that
\begin{align}\label{e.nabla_sP>0}
    \nabla\sP(x) \in \S^\D_+,\quad\forall x\in \S^\D.
\end{align}
Due to~\eqref{e.nabla_F=self-overlap}, we can deduce that $\nabla F_N(x)\in \S^\D_+$ for all $N$ and $x$. Since $\S^\D_+$ is a closed set,~\eqref{e.nabla_sP>0} follows from Proposition~\ref{p.P_diff}.

Recall the definition of $\Pi(z)$ in~\eqref{e.Pi(z)}. Display~\eqref{e.nabla_sP>0} makes $\Pi(\nabla \sP(x))$ well-defined.
Since $\Pi(\nabla \sP(x))$ is a subcollection of $\Pi$, we obtain from~\eqref{e.parisi_soc} that
\begin{align*}
    \lim_{N\to\infty} F_N(x) \leq \inf_{\pi\in \Pi(\nabla \sP(x))}\sP(\pi,x).
\end{align*}
It suffices to establish the matching lower bound. Below, we briefly recall the proof of the lower bound for \eqref{e.parisi_soc} (see \cite[Proposition~4.1]{chen2023self}) and explain where we can use Proposition~\ref{p.P_diff} to obtain the desired result.

The proof of \cite[Proposition~4.1]{chen2023self} considers the case $x=0$, but the argument straightforwardly adapts to nonzero cases. For fixed $x\in \S^\D$, we consider the term from the Aizenman--Sims--Starr scheme (see \cite[4.1]{chen2023self})
\begin{align*}
    A_N(\bx) 
    = \E \log\la \int \exp\Ll(Z(\sigma)\cdot \tau - \frac{1}{2}\nabla\xi\Ll(\frac{\sigma\sigma^\intercal}{N}\Rr)\cdot \tau\tau^\intercal\Rr)  \d P_1(\tau)\ra_{x,\,\bx}  \notag
    \\
    - \E \log \la \exp\Ll(Y(\sigma)
    - \frac{1}{2}\theta\Ll(\frac{\sigma\sigma^\intercal}{N}\Rr)\Rr) \ra_{x,\,\bx}\end{align*}
where $(Z(\sigma))_{\sigma\in\R^{\D\times N}}$ and $(Y(\sigma))_{\sigma\in\R^{\D \times N}}$ are, respectively, centered independent $\R^\D$-valued and real-valued Gaussian processes with covariance $\E Z(\sigma)Z(\sigma')^\intercal = \nabla \xi\Ll(\frac{\sigma\sigma'^\intercal}{N}\Rr)$ and $\E Y(\sigma)Y(\sigma')=\theta\Ll(\frac{\sigma\sigma'}{N}\Rr)$.
Using the same argument in the proof of \cite[Lemma~3.3]{pan}, we can find a sequence $(N_k)_{k\in\N}$ of increasing integers and a sequence $\bx^{N_k}$ of perturbation parameters such that
\begin{align}\label{e.liminfF_N}
    \liminf_{N\to\infty}F_N (x)\geq  \lim_{k\to\infty} A_{N_k} \Ll(\bx^{N_k}\Rr)
\end{align}
and following holds.
Set $R^{l,l'}_N = \frac{\sigma^l\Ll(\sigma^{l'}\Rr)^\intercal}{N}$ and $R_N= \Ll(R^{l,l'}_N\Rr)_{l,l'\in\N}$.
Along this subsequence, under $\E \la \cdot\ra_{x,\,\bx^{N_k}}$,
\begin{itemize}
    \item $R_N$ asymptotically satisfies the Ghirlanda--Guerra identities (see \cite[(4.2)]{chen2023self};
    \item there is a random array $R_\infty = \Ll(R^{l,l'}_\infty\Rr)_{l,l'\in\N}$ such that for any $n\in\N$, $\Ll(R_N^{l,l'}\Rr)_{l,l'\leq n}$ converges in distribution to $\Ll(R_\infty^{l,l'}\Rr)_{l,l'\leq n}$.
\end{itemize}
Moreover, Panchenko's synchronization result \cite[Theorem~4]{pan.vec} states that there is a deterministic increasing Lipschitz function $\Psi:[0,\infty)\to \S^\D_+$ such that
\begin{align}\label{e.R=Psi(tr(R))}
    R^{l,l'}_\infty = \Psi\Ll(\tr\Ll(R^{l,l'}_\infty\Rr)\Rr)
\end{align}
almost surely, for every $l,l'\in \N$.

Denote the quantile function of $\tr\Ll(R^{1,2}_\infty\Rr)$ by $\zeta$ (a real-valued left-continuous increasing function on $[0,1]$). Due to $R^{1,2}_N \leqpsd \frac{1}{2}\Ll(R^{1,1}_N + R^{2,2}_N\Rr)$, the concentration in Proposition~\ref{p.so_concent}, and the convergence in Proposition~\ref{p.P_diff}, we have that $R^{1,2}_\infty \leqpsd \nabla \sP(x)$ a.s.\ and thus $\zeta\leq \tr(\nabla \sP(x))$.
Setting $l=l'$ in~\eqref{e.R=Psi(tr(R))}, we have
\begin{align}\label{e.nablasP(x)=Psi}
    \nabla \sP(x)  = \Psi\Ll(\tr\Ll(\nabla\sP(x)\Rr)\Rr).
\end{align}

As explained below \cite[(4.9)]{chen2023self}, we then choose a sequence $(\zeta_m)_{m\in\N}$ (in the same class of functions as $\zeta$) to approximate $\zeta$ and they are chosen to satisfy $\zeta_m(1) = \tr(\nabla \sP(x))$. 
Hence,~\eqref{e.nablasP(x)=Psi} ensures that $\Psi\circ \zeta_m \in \Pi(\nabla \sP(x))$ for all $m$.

Using \eqref{e.R=Psi(tr(R))}, the convergence of $R_N$ to $R_\infty$ and the approximation of $\zeta$ by $\zeta_m$, for any $\eps>0$, one argue as in the paragraph below \cite[(4.10)]{chen2023self} to show
\begin{align*}
    \Ll|A_{N_k}\Ll(\bx^{N_k}\Rr) - \mathscr\sP\Ll(\Psi\circ\zeta_m,x\Rr)\Rr|\leq \eps
\end{align*}
for sufficiently large $k$ and $m$.
This along with~\eqref{e.liminfF_N} implies that
\begin{align*}
    \lim_{N\to\infty} F_N(x) \geq \inf_{\pi \in\in \Pi(\nabla \sP(x))} \sP(\pi, x)-\eps. 
\end{align*}
Sending $\eps\to0$, we obtain the matching lower bound.
\end{proof}

We collect above results to prove the main result.

\begin{proof}[Proof of Theorem~\ref{t}]
    Part~\eqref{i.t.cvg_self-overlap} follows from Proposition~\ref{p.P_diff} and~\eqref{e.nabla_F=self-overlap}; Part~\eqref{i.t.conc_self-overlap} is given by Proposition~\ref{p.so_concent}; and Part~\eqref{i.t.opt} is a restatement of Proposition~\ref{p.inf_Pi(nabal_sP)}.
\end{proof}

\subsection{Application to the Potts spin glass}\label{s.pott}
We start by describing the setting.
Let $\basis=\{e_1,\dots,e_\D\}$ be the standard basis of $\R^\D$. Let $P_1$ be the uniform probability measure on $\basis$. Let $\sigma=(\sigma_{k,i})_{1\leq k\leq \D,\, 1\leq i\leq N}$ be a sample of $P_N$. Due to the column vector $\sigma_{\cdot,i}\in \basis$ for every $i$, we have
\begin{gather}
    \tr\Ll(\sigma\sigma^\intercal\Rr) = \sum_{i=1}^N\sum_{k=1}^\D \sigma_{k,i}\sigma_{k,i} = N,\label{e.tr=N}
    \\
    k\neq k'\quad\implies\quad \Ll(\sigma\sigma^\intercal\Rr)_{k,k'} = \sum_{i=1}^N\sigma_{k,i}\sigma_{k',i} =0.\label{e.kneqk'=0}
\end{gather}
Let $\Sym$ be the permutation group of $\{1,\dots,\D\}$. 
Every element $\Sym$ is a bijection $\ks:\{1,\dots,\D\}\to \{1,\dots,\D\}$.
In addition to the existing assumptions on $\xi$, we assume that
\begin{align}\label{e.xi_sym}
    \xi (a) = \xi\Ll( \Ll(a_{\ks(k),\,\ks(k')}\Rr)_{k,k'}\Rr),\quad\forall a\in\R^{\D\times\D},\ \ks\in \Sym.
\end{align}
An example of $\xi$ satisfying this is the mixed $p$-spin interaction as in \cite[(5)]{pan.vec} with the $p$-th inverse temperature $\beta_p(k) =\beta_p$ independent of $k\in\{1,\dots,\D\}$.

With $P_1$ described above, the model is said to be a \textit{Potts spin glass model}. 
The parameter $D$ is interpreted as the \textit{number of types} of the Potts spin.
We say that $\xi$ is \textit{symmetric} if $\xi$ satisfying~\eqref{e.xi_sym}. These clarify the setting in Corollary~\ref{c.potts}.

\begin{proof}[Proof of Corollary~\ref{c.potts}]
    Using~\eqref{e.xi} and~\eqref{e.xi_sym}, we can show that, for every $\ks$,
\begin{align*}
    \Ll(H_N(\sigma)\Rr)_{\sigma} \stackrel{\d }{=} \Ll(H_N\Ll(\Ll(\sigma_{\ks(k),\,i}\Rr)_{k,i}\Rr)\Rr)_{\sigma}.
\end{align*}
Under $P_N$, we also have $\sigma \stackrel{\d}{=} \Ll(\sigma_{\ks(k),\,i}\Rr)_{k,i}$ for every $\ks$. Using these and the expression of $\la\cdot\ra_0$ in~\eqref{e.<>_x}, we have
\begin{align}\label{e.E<f>=E<fp>}
    \E \la f(\sigma)\ra_0= \E \la f\Ll(\Ll(\sigma_{\ks(k),\,i}\Rr)_{k,i}\Rr)\ra_0
\end{align}
for every bounded continuous function $f:\R^{\D\times N}\to \R$ and every $\ks$.

Combining~\eqref{e.tr=N} and~\eqref{e.E<f>=E<fp>}, we have
\begin{align*}
    \E\la \Ll(\sigma\sigma^\intercal\Rr)_{k,k}\ra_0 = \frac{1}{\D}\sum_{k'=1}^\D \E\la \Ll(\sigma\sigma^\intercal\Rr)_{k',k'}\ra_0 = \frac{N}{\D},\quad\forall k\in\{1,\dots,\D\}.
\end{align*}
This along with~\eqref{e.kneqk'=0} implies that
\begin{align*}
    \E\la \frac{\sigma\sigma^\intercal}{N}\ra_0 = \frac{1}{\D}\identity_\D.
\end{align*}
Corollary~\ref{c.potts} follows from this and Theorem~\ref{t}.
\end{proof}

\section{Equivalence of variational formulae}\label{s.equiv}

We directly verify the equivalence of formulae in~\eqref{e.pan.parisi} and~\eqref{e.hj.parisi}.
\begin{proposition}\label{p.equiv}
If $\xi$ is convex on $\S^D_+$, then
\begin{align}\label{e.equiv_formulae}
    \sup_{z\in\S^\D_+}\inf_{y\in\S^\D_+,\,\pi\in\Pi}\Ll\{\mathscr{P}(\pi,y) -y\cdot z+\frac{1}{2}\xi(z)\Rr\} = \sup_{z\in\mathcal{D}}\inf_{y\in\S^\D,\,\pi\in\Pi(z)}\Ll\{\mathscr{P}(\pi,y) -y\cdot z+\frac{1}{2}\xi(z)\Rr\}.
\end{align}
\end{proposition}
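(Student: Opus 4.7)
My plan is to identify both sides of \eqref{e.equiv_formulae} with the common quantity
\[
K \;:=\; \sup_{z\in\mathcal{D}}\Bigl\{\tfrac{1}{2}\xi(z) - \sP^*(z)\Bigr\},
\qquad
\sP^*(z)\;:=\;\sup_{y\in\S^\D}\{y\cdot z - \sP(y)\}.
\]
The guiding heuristic is that at an optimal configuration one should have $z=\nabla\sP(y)$, at which point the path infimum on the right-hand side of \eqref{e.equiv_formulae} collapses via the identity
\[
\sP(y) \;=\; \inf_{\pi\in\Pi(\nabla\sP(y))}\sP(\pi,y),
\]
which is Theorem~\ref{t}~\eqref{i.t.opt} combined with \eqref{e.sP(x)}. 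Note also that Theorem~\ref{t}~\eqref{i.t.cvg_self-overlap} and the closedness of $\mathcal{D}$ force $\nabla\sP(\S^\D)\subset\mathcal{D}$, whence $\mathrm{dom}(\sP^*)\subset\mathcal{D}$ since the latter is the closed convex hull of $\nabla\sP(\S^\D)$.

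To show that the left-hand side of \eqref{e.equiv_formulae} equals $K$, I would first collapse the inner infimum over $\pi\in\Pi$ to $\sP(y)$, reducing the left-hand side to $\sup_{z\in\S^\D_+}\{\tfrac{1}{2}\xi(z)-\sP^*_+(z)\}$, where $\sP^*_+(z):=\sup_{y\in\S^\D_+}\{y\cdot z-\sP(y)\}$. Recognizing $\sP^*_+$ as the convex conjugate of $\sP+I_{\S^\D_+}$ and invoking Fenchel's infimal-convolution identity (whose constraint qualification is trivial since $\mathrm{ri}(\S^\D_+)$ meets the domain of $\sP$), one obtains $\sP^*_+(z)=\inf_{\mu\in\S^\D_+}\sP^*(z+\mu)$. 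Substituting and changing variables $z'=z+\mu\in\S^\D_+$, the monotonicity of $\xi$ on $\S^\D_+$ from assumption \ref{i.xi_incre} gives $\sup_{0\leqpsd z\leqpsd z'}\xi(z)=\xi(z')$, so the left-hand side reduces to $\sup_{z'\in\S^\D_+}\{\tfrac{1}{2}\xi(z')-\sP^*(z')\}=K$.

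To show that the right-hand side of \eqref{e.equiv_formulae} equals $K$, set $G(y,z):=\inf_{\pi\in\Pi(z)}\sP(\pi,y)$, so the inner infimum becomes $\inf_{y\in\S^\D}\{G(y,z)-y\cdot z\}$. Since $\Pi(z)\subset\Pi$ gives $G\geq\sP$ pointwise, this infimum is at least $-\sP^*(z)$. For the reverse, pick any $z\in\nabla\sP(\S^\D)$ and $y_0$ with $\nabla\sP(y_0)=z$; the key identity yields $G(y_0,z)=\sP(y_0)$, and because $y_0$ attains the supremum defining $\sP^*(z)$ one computes $G(y_0,z)-y_0\cdot z=-\sP^*(z)$. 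Hence the inner infimum equals $-\sP^*(z)$ on $\nabla\sP(\S^\D)$. A density and lower-semicontinuity argument then extends this to all of $\mathrm{dom}(\sP^*)$ and shows that values $z\in\mathcal{D}\setminus\mathrm{dom}(\sP^*)$ do not push the right-hand side supremum above $K$.

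The main obstacle is this last extension: while the identity $\inf_y\{G(y,z)-y\cdot z\}=-\sP^*(z)$ is immediate on $\nabla\sP(\S^\D)$, handling boundary and exterior points of this range requires exploiting that $\mathrm{dom}(\sP^*)$ is the closed convex hull of $\nabla\sP(\S^\D)$, together with the lower semicontinuity of $\sP^*$ and continuity of $\xi$, to approximate by interior points and pass to the limit. By comparison, the Fenchel infimal-convolution step in the left-hand side analysis is routine, since the only constraint qualification required is the trivial fact that $\mathrm{ri}(\S^\D_+)$ is nonempty.
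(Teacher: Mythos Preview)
Your treatment of the left-hand side is correct and genuinely different from the paper's. Where the paper (Lemma~\ref{l.HJ_S^D}) invokes Hamilton--Jacobi theory---Hopf formulae and viscosity uniqueness on $\S^\D_{++}$---to enlarge the optimization domain from $\S^\D_+\times\S^\D_+$ to $\S^\D\times\S^\D$, your Fenchel infimal-convolution identity $\sP^*_+(z)=\inf_{\mu\in\S^\D_+}\sP^*(z+\mu)$ together with the monotonicity in~\ref{i.xi_incre} achieves the same reduction by pure convex analysis. This is arguably cleaner for that half. (A small quibble: your phrase ``$\mathcal{D}$ is the closed convex hull of $\nabla\sP(\S^\D)$'' is not established and probably false in general; what you actually need and can prove directly is the inclusion $\mathrm{dom}(\sP^*)\subset\mathcal{D}$, by separating any $z\notin\mathcal{D}$ from $\mathcal{D}$ with a hyperplane and integrating $\nabla\sP$ along the normal ray.)

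The right-hand side, however, has a real gap at precisely the point you flag. The inequality $\mathrm{RHS}\geq K$ is immediate from $G\geq\sP$; the content lies in $\mathrm{RHS}\leq K$, which requires bounding $\inf_{y}\{G(y,z)-y\cdot z\}$ from \emph{above} by $-\sP^*(z)$ for every $z\in\mathcal{D}$, not only on the range $\nabla\sP(\S^\D)$. Lower semicontinuity of $\sP^*$ is of no help here: an infimum over $y$ is naturally upper (not lower) semicontinuous in $z$, and if you approximate a boundary point $z$ by $z_n=\nabla\sP(y_n)\in\mathrm{ri}(\mathrm{dom}\,\sP^*)$, the points $y_n$ may escape to infinity, so $G(y_n,z)-y_n\cdot z$ is not controlled by $G(y_n,z_n)-y_n\cdot z_n=-\sP^*(z_n)$ (the cross term $y_n\cdot(z-z_n)$ is unbounded). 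The paper closes this gap with two ingredients you have not supplied: a modulus of continuity for $z\mapsto G(y,z)$ that is \emph{uniform in $y$} (Lemmas~\ref{l.Lip_P(pi,x)}--\ref{l.infP>infP}, resting on an explicit construction of a map $\Pi(z+w)\to\Pi(z)$ via conjugation by $\sqrt{z}(\sqrt{z+w})^{-1}$), and a perturbation device (Lemmas~\ref{l.y_eps} and~\ref{l.-infty_y_eps}) that manufactures a single near-minimizer $y_\eps$ of $y\mapsto\sP(y)-y\cdot z$ satisfying $|\nabla\sP(y_\eps)-z|\leq\eps$, to which the uniform estimate can then be applied at that one point. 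The combination is Lemma~\ref{l.infP-yz=infP-yz}, which is exactly the identity $\inf_y\{G(y,z)-y\cdot z\}=-\sP^*(z)$ for all $z\in\S^\D_+$. Without a substitute for the uniform-in-$y$ continuity of $G$, your extension does not go through---in particular, for $z\in\mathcal{D}\setminus\mathrm{dom}(\sP^*)$ you have offered no mechanism to show that $\inf_y\{G(y,z)-y\cdot z\}+\tfrac{1}{2}\xi(z)$ stays below $K$.
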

We emphasize that the equivalence is already known since they both describe the limit of $F^\std_N$.
The goal here is to prove the equivalence directly using properties of the Parisi formula and the Parisi functional. 
We want to demonstrate that essentially we only need some elementary analysis and the relation
\begin{align}\label{e.P(x)=infP=infP}
    \sP(x) = \inf_{\pi\in\Pi}\sP(\pi,x) = \inf_{\pi\in\Pi(\nabla \sP(x))}\sP(\pi,x),\quad\forall x\in \S^\D,
\end{align}
which is a consequence of~\eqref{e.parisi_soc},~\eqref{e.sP(x)}, and Theorem~\ref{t}~\eqref{i.t.opt}.
The assumption that $\xi$ is convex on $\S^D_+$ is only used to ensure the second equality in the above display.
In the following, we will use the first equality in \eqref{e.P(x)=infP=infP} repeatedly without mentioning.

\subsection{Heuristic derivation}\label{s.equiv.heuristics}
As explained in \cite[Section~5]{chen2023self}, the formula~\eqref{e.hj.parisi} is obtained by solving some Hamilton--Jacobi equation over $(0,\infty)\times \S^\D_+$, which is the reason for optimizations over $\S^\D_+$ in~\eqref{e.hj.parisi}. First, we claim that we can actually solve the equation over $(0,\infty)\times \S^\D$, which allows us to optimize over $y,\,z\in\S^\D$. Hence, the left-hand side in~\eqref{e.equiv_formulae} is equal to itself with $\S^\D_+$ replaced by $\S^\D$.

Then, we can infer from Theorem~\ref{t}~\eqref{i.t.cvg_self-overlap} that the closed convex hull $K$ of $\{\nabla\sP(x):x\in\S^\D\}$ is contained in $\mathcal{D}$ (defined in~\eqref{e.mathcalD}). 
Therefore, if $z\not\in \mathcal{D}$, then $z\not \in K$ and thus we expect to have
\begin{align*}
    \inf_{y\in\S^\D,\pi\in\Pi}\Ll\{\sP(\pi,y)-y\cdot z\Rr\} = \inf_{y\in\S^\D} \{\sP(y)-y\cdot z\}=-\infty,\quad\forall z\not\in\mathcal{D}.
\end{align*}
Hence,
\begin{align*}
    \sup_{z\in\S^\D}\inf_{y\in\S^\D,\,\pi\in\Pi}\Ll\{\mathscr{P}(\pi,y) -y\cdot z+\frac{1}{2}\xi(z)\Rr\} = \sup_{z\in\mathcal{D}}\inf_{y\in\S^\D,\,\pi\in\Pi}\Ll\{\mathscr{P}(\pi,y) -y\cdot z+\frac{1}{2}\xi(z)\Rr\}.
\end{align*}
For any $z\in \mathcal{D}$, suppose that 
$\inf_{y\in\S^\D,\pi\in\Pi}\Ll\{\sP(\pi,y)-y\cdot z\Rr\}$ is achieved at some $y_z$. Then, we have $\nabla \sP(y_z) = z$ which by~\eqref{e.P(x)=infP=infP} implies that
\begin{align*}
    \inf_{y\in\S^\D,\pi\in\Pi}\Ll\{\sP(\pi,y)-y\cdot z\Rr\} =  \inf_{\pi\in\Pi}\Ll\{\sP(\pi,y_z)-y_z\cdot z\Rr\} = \inf_{\pi\in\Pi(z)}\Ll\{\sP(\pi,y_z)-y_z\cdot z\Rr\} 
    \\
    \geq \inf_{y\in\S^\D,\pi\in \Pi(z)} \Ll\{\sP(\pi,y)-y\cdot z\Rr\} .
\end{align*}
On the other hand, due to $\Pi\supset \Pi(z)$,
\begin{align*}
    \inf_{y\in\S^\D,\pi\in\Pi}\Ll\{\sP(\pi,y)-y\cdot z\Rr\} \leq \inf_{y\in\S^\D,\pi\in \Pi(z)} \Ll\{\sP(\pi,y)-y\cdot z\Rr\} .
\end{align*}
Therefore, the above must be an equality, which then implies 
\begin{align*}
    \sup_{z\in\mathcal{D}}\inf_{y\in\S^\D,\,\pi\in\Pi}\Ll\{\mathscr{P}(\pi,y) -y\cdot z+\frac{1}{2}\xi(z)\Rr\} = \sup_{z\in\mathcal{D}}\inf_{y\in\S^\D,\,\pi\in\Pi(z)}\Ll\{\mathscr{P}(\pi,y) -y\cdot z+\frac{1}{2}\xi(z)\Rr\}
\end{align*}
and the desired representation.

\subsection{Preliminary lemmas}

To obtain a rigorous proof of Proposition~\ref{p.equiv},
we need a few technical lemmas.

\begin{lemma}\label{l.y_eps}
Let $f:\S^\D\to\R$ be differentiable everywhere. Let $z\in \S^\D$ satisfy
\begin{align*}
    \inf_{y\in\S^\D}\{f(y)- y\cdot z\}>-\infty.
\end{align*}
For every $\eps>0$, there is $y_\eps \in \S^\D$ such that
\begin{gather}
     \inf_{y\in\S^\D}\Ll\{f(y) +\eps\sqrt{1+|y|^2}- y\cdot z\Rr\} = f(y_\eps) + \eps\sqrt{1+|y_\eps|^2} - y_\eps\cdot z,\label{e.y_eps_exists}
    \\
    |z-\nabla f(y_\eps)|\leq \eps. \label{e.|z-nabla_f|<eps}
\end{gather}
Moreover,
\begin{gather}
    \lim_{\eps\to0}\inf_{y\in\S^\D}\Ll\{f(y) +\eps\sqrt{1+|y|^2}- y\cdot z\Rr\} = \inf_{y\in\S^\D}\{f(y) - y\cdot z\}. \label{e.liminf<}
\end{gather}

\end{lemma}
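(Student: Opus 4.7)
The plan is to exploit three elementary properties of the regularizer $h(y) := \sqrt{1+|y|^2}$: it is smooth, strictly convex, linearly coercive, and its gradient $\nabla h(y) = y/\sqrt{1+|y|^2}$ satisfies $|\nabla h(y)|\leq 1$ uniformly in $y$.

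First, I would establish existence of a minimizer $y_\eps$ of the functional $\Phi_\eps(y) := f(y)+\eps\sqrt{1+|y|^2}-y\cdot z$. The assumption $\inf_{y}\{f(y)-y\cdot z\}>-\infty$ furnishes a constant $M$ with $f(y)-y\cdot z\geq -M$ for every $y\in\S^\D$, hence $\Phi_\eps(y)\geq -M+\eps\sqrt{1+|y|^2}$, which tends to $+\infty$ as $|y|\to\infty$. Since $\S^\D$ is finite-dimensional and $\Phi_\eps$ is continuous (by differentiability of $f$), any sublevel set is compact and the infimum is attained at some $y_\eps$, giving \eqref{e.y_eps_exists}.

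Next, I would obtain \eqref{e.|z-nabla_f|<eps} from the first-order optimality condition $\nabla\Phi_\eps(y_\eps)=0$, which reads
\begin{align*}
    \nabla f(y_\eps) - z + \eps\frac{y_\eps}{\sqrt{1+|y_\eps|^2}} = 0.
\end{align*}
Taking norms and using $|y_\eps|/\sqrt{1+|y_\eps|^2}\leq 1$ yields $|z-\nabla f(y_\eps)|\leq \eps$ directly.

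Finally, for \eqref{e.liminf<}, I would argue by a standard sandwich. The lower bound $\inf_y\Phi_\eps(y)\geq \inf_y\{f(y)-y\cdot z\}$ is trivial since $\eps\sqrt{1+|y|^2}\geq 0$. For the reverse, fix any $y\in\S^\D$; then $\inf_{y'}\Phi_\eps(y')\leq f(y)+\eps\sqrt{1+|y|^2}-y\cdot z$, and sending $\eps\to 0$ gives $\limsup_{\eps\to 0}\inf_{y'}\Phi_\eps(y')\leq f(y)-y\cdot z$. Taking the infimum over $y$ closes the sandwich. No step here is a serious obstacle; the only thing worth checking carefully is that the coercivity estimate really uses the full hypothesis $\inf_y\{f(y)-y\cdot z\}>-\infty$ rather than any convexity of $f$, so the lemma is applicable beyond the convex setting if later needed.
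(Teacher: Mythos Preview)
Your proof is correct and follows essentially the same approach as the paper: coercivity from the hypothesis to get existence of $y_\eps$, the first-order condition at $y_\eps$ to get \eqref{e.|z-nabla_f|<eps}, and a sandwich argument for \eqref{e.liminf<}. Your presentation of the limit via $\limsup$ is marginally cleaner than the paper's $\delta$-argument, but the content is identical.
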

\begin{proof}
By assumption, there is a constant $C>0$ such that $f(y)-y\cdot z\geq -C$ for all $y$, which implies that
\begin{align*}
    f(y) +\eps \sqrt{1+|y|^2}-y\cdot z \geq \eps\sqrt{1+|y|^2}  -C.
\end{align*}
Since the right-hand side is unbounded as $|y|\to\infty$, the left-hand side must achieve its minimum in a bounded set, which yields the existence of $y_\eps$ satisfying~\eqref{e.y_eps_exists}.

Using~\eqref{e.y_eps_exists}, we can differentiate at $y_\eps$ to see $z= \nabla f(y_\eps) + \frac{\eps y_\eps}{\sqrt{1+|y_\eps|^2}}$ which yields~\eqref{e.|z-nabla_f|<eps}.

For any $\delta>0$, there is $\bar y$ such that
\begin{align*}
    f(\bar y) - \bar y\cdot z \leq \inf_{y\in\S^\D}\{f(y)-y\cdot z\}+\frac{\delta}{2}.
\end{align*}
Then, we can choose $\bar \eps$ such that, for all $\eps\in(0,\bar\eps)$, 
\begin{align*}
    f(\bar y) +\eps\sqrt{1+|\bar y|^2} - \bar y\cdot z \leq \inf_{y\in\S^\D}\{f(y)-y\cdot z\}+\delta,
\end{align*}
which implies that
\begin{align*}
    \inf_{y\in\S^\D}\{f(y)-y\cdot z\}\leq \inf_{y\in \S^\D}\Ll\{f(y) +\eps\sqrt{1+| y|^2} -  y\cdot z\Rr\} \leq \inf_{y\in\S^\D}\{f(y)-y\cdot z\}+\delta
\end{align*}
Sending $\eps\to0$ and then $\delta\to0$, we obtain~\eqref{e.liminf<}.
\end{proof}

\begin{lemma}\label{l.-infty_y_eps}
Let $f:\S^\D\to\R$ be Lipschitz and differentiable everywhere. Let $z\in \S^\D$ satisfy
\begin{align*}
    \inf_{y\in\S^\D}\{f(y)- y\cdot z\}=-\infty.
\end{align*}
For every $\eps>0$, there is $y_\eps \in \S^\D$ such that
\begin{gather}
     \inf_{y\in\S^\D}\{f(y) +\eps|y|^2- y\cdot z\} = f(y_\eps) + \eps|y_\eps|^2 - y_\eps\cdot z.\label{e.-infty_y_eps_exists}
\end{gather}
Moreover,
\begin{align} \label{e.-infty_liminf<}
    \lim_{\eps\to0}\inf_{y\in\S^\D}\{f(y) +\eps|y|^2- y\cdot z\} =-\infty.
\end{align}
\end{lemma}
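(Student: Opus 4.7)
The plan is to handle the two assertions separately, both by elementary estimates relying only on the Lipschitz and continuity properties of $f$.

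For the existence statement \eqref{e.-infty_y_eps_exists}, the strategy is to establish coercivity of $y\mapsto f(y)+\eps|y|^2-y\cdot z$ and then invoke continuity. Let $L$ denote the Lipschitz constant of $f$. Then $f(y)\geq f(0)-L|y|$ for every $y\in\S^\D$, so
\begin{align*}
    f(y)+\eps|y|^2-y\cdot z \geq f(0)+\eps|y|^2-(L+|z|)|y|,
\end{align*}
which tends to $+\infty$ as $|y|\to\infty$. Combined with the continuity of the objective (inherited from the Lipschitz property of $f$), the infimum is attained on some closed ball, giving a minimizer $y_\eps$. This mirrors the existence argument in Lemma~\ref{l.y_eps}, but now the quadratic penalty $\eps|y|^2$ already provides enough growth without needing the square root form.

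For the divergence statement \eqref{e.-infty_liminf<}, the plan is to exploit the hypothesis $\inf_{y}\{f(y)-y\cdot z\}=-\infty$ to pick, for every large $M>0$, some $\bar y_M\in\S^\D$ with $f(\bar y_M)-\bar y_M\cdot z\leq -2M$. Note that $\bar y_M$ is a fixed point with a finite norm (since $f$ is real-valued on $\S^\D$), hence one can simply choose $\eps$ small enough—for example $\eps\leq M/(1+|\bar y_M|^2)$—to guarantee $\eps|\bar y_M|^2\leq M$, and therefore
\begin{align*}
    \inf_{y\in\S^\D}\{f(y)+\eps|y|^2-y\cdot z\}\leq f(\bar y_M)+\eps|\bar y_M|^2-\bar y_M\cdot z \leq -M.
\end{align*}
Since $M$ was arbitrary, sending $\eps\to 0$ yields the desired limit $-\infty$.

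I do not expect a genuine obstacle here; both parts reduce to standard coercivity/continuity arguments, and the key observation is simply that the quadratic penalty is chosen precisely so that it produces coercivity (for existence) yet vanishes pointwise in $\eps$ on any fixed $\bar y_M$ (for the limit). The only mild care needed is to ensure that the test point $\bar y_M$ is chosen \emph{before} shrinking $\eps$, so that the quantity $\eps|\bar y_M|^2$ can be made arbitrarily small against the fixed deficit $-2M$.
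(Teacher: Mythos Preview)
Your proposal is correct and follows essentially the same approach as the paper: the paper also bounds $f(y)+\eps|y|^2-y\cdot z\geq f(0)-\|f\|_{\mathrm{Lip}}|y|-|z||y|+\eps|y|^2$ to get coercivity and hence a minimizer, and for the limit it likewise fixes $\bar y$ with $f(\bar y)-\bar y\cdot z\leq -M$ and then shrinks $\eps$ so that $\eps|\bar y|^2$ is negligible. The only differences are cosmetic (the paper uses $-M$ and $-M+1$ where you use $-2M$ and $-M$).
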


\begin{proof}
Note that, for all $y\in\S^\D$,
\begin{align*}
    f(y) +\eps |y|^2-y\cdot z \geq f(0)-\|f\|_\mathrm{Lip}|y|-|z||y| +\eps|y|^2.
\end{align*}
Since the right-hand side is unbounded as $|y|\to\infty$, the infimum of the left-hand side must be achieved at some $y_\eps$, which verifies~\eqref{e.-infty_y_eps_exists}.

By assumption, for any $M>0$, there is $\bar y$ such that
\begin{align*}
    f(\bar y) - \bar y\cdot z \leq -M.
\end{align*}
For sufficiently small $\eps$, we have
\begin{align*}
    f(\bar y) +\eps|\bar y|^2- \bar y\cdot z \leq -M+1,
\end{align*}
which implies~\eqref{e.-infty_liminf<}.
\end{proof}

\begin{lemma}\label{l.Lip_P(pi,x)}
For every $x\in\S^\D$ and every $\pi,\,\pi'\in\Pi$,
\begin{align*}
    \Ll|\sP(\pi,x)-\sP(\pi',x)\Rr|\leq \frac{1}{2}\int_0^1 \Ll(\Ll|\nabla\xi\circ\pi(s)-\nabla\xi\circ\pi'(s)\Rr|+\Ll|\theta\Ll(\pi(s)\Rr)-\theta\Ll(\pi'(s)\Rr)\Rr| \Rr)\d s.
\end{align*}
\end{lemma}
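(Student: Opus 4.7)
The plan is to split $\sP(\pi,x)-\sP(\pi',x)$ into a Gaussian-integral part (call it $\Phi(\nabla\xi\circ\pi,x)-\Phi(\nabla\xi\circ\pi',x)$, where $\Phi$ denotes the first term on the right-hand side of~\eqref{e.sP(pi,x)}) and the $\theta$-integral part $\tfrac{1}{2}\int_0^1[\theta(\pi(s))-\theta(\pi'(s))]\,ds$. The $\theta$-part is immediately controlled by $\tfrac{1}{2}\int_0^1|\theta(\pi(s))-\theta(\pi'(s))|\,ds$ via the triangle inequality, so the remaining task is to show $|\Phi(q,x)-\Phi(q',x)|\le\tfrac{1}{2}\int_0^1|q(s)-q'(s)|\,ds$ for $q:=\nabla\xi\circ\pi$ and $q':=\nabla\xi\circ\pi'$, both of which lie in $\Pi$.

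For this Gaussian-part bound, I would use a Guerra-type interpolation. By convexity of $\Pi$, the path $q_t:=(1-t)q'+tq$ lies in $\Pi$ for every $t\in[0,1]$. Letting $\phi(t)$ be the Gaussian log-integral in~\eqref{e.sP(pi,x)} with $\nabla\xi\circ\pi$ replaced by $q_t$, I realize the Gaussian process by the standard coupling $w^{q_t}=\sqrt{t}\,u+\sqrt{1-t}\,u'$, where $u,u'$ are independent centered $\R^\D$-valued Gaussian processes on $\supp\mathfrak{R}$ of covariances $q(\alpha\wedge\alpha')$ and $q'(\alpha\wedge\alpha')$, respectively. Differentiating $\phi(t)$ in $t$ and applying Gaussian integration by parts for the continuous cascade (as in the proof of \cite[Proposition~4.1]{chen2023self}), the self-overlap-correction term $-\tfrac{1}{2}q_t(1)\cdot\tau\tau^\intercal$ cancels the diagonal replica contribution coming from $\alpha^1\wedge\alpha^1=1$, leaving
\begin{equation*}
\phi'(t)=-\tfrac{1}{2}\,\E\la (q-q')(\alpha^1\wedge\alpha^2)\cdot\tau^1(\tau^2)^\intercal\ra_t,
\end{equation*}
where $(\tau^\ell,\alpha^\ell)$ are replicas under the time-$t$ Gibbs measure. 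Since $P_1$ is supported on the unit ball, $|\tau^1(\tau^2)^\intercal|\le|\tau^1|\,|\tau^2|\le 1$, which yields $|\phi'(t)|\le\tfrac{1}{2}\E\la|(q-q')(\alpha^1\wedge\alpha^2)|\ra_t$.

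The remaining step is to bound $\int_0^1\E\la|(q-q')(\alpha^1\wedge\alpha^2)|\ra_t\,dt$ by $\int_0^1|(q-q')(s)|\,ds$. The cleanest route is to reduce to the discrete Ruelle cascade by approximating $q,q'$ uniformly on $[0,1]$ by piecewise-constant paths on a common partition $0=\lambda_0<\dots<\lambda_r=1$. In this discrete setting $\Phi$ becomes a nested cavity recursion indexed by the partition, and a level-by-level Gaussian IBP gives the estimate $|\partial_{q_j}\Phi|\le\tfrac{1}{2}(\lambda_{j+1}-\lambda_j)$ (via $|\tau^1(\tau^2)^\intercal|\le 1$ once more), with the factor $\lambda_{j+1}-\lambda_j$ coming from the Parisi weight of the $j$-th block. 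Summing along the interpolation from $q'$ to $q$ yields $\tfrac{1}{2}\sum_j(\lambda_{j+1}-\lambda_j)|q_j-q'_j|=\tfrac{1}{2}\int_0^1|q-q'|\,ds$; passage to continuous paths follows by density (with the trivial sup-bound $|\phi'(t)|\le\tfrac{1}{2}\sup_s|(q-q')(s)|$ controlling the approximation error). The main obstacle is this level-by-level IBP on the discrete cascade, where one must carefully track the Gaussian cavity weights and the self-overlap cancellations in order to pin down the sharp constant $\tfrac{1}{2}$.
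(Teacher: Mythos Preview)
Your decomposition into the $\theta$-integral part and the Gaussian part $\Phi$, the interpolation $q_t=(1-t)q'+tq$, the coupling $w^{q_t}=\sqrt{1-t}\,w^{q'}+\sqrt{t}\,w^{q}$, and the Gaussian integration by parts leading to
\[
\phi'(t)=-\tfrac{1}{2}\,\E\big\langle (q-q')(\alpha^1\wedge\alpha^2)\cdot\tau^1(\tau^2)^\intercal\big\rangle_t
\]
are exactly what the paper does. The difference is in how you handle the last step.

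You identify the bound $\E\langle|(q-q')(\alpha^1\wedge\alpha^2)|\rangle_t\le\int_0^1|(q-q')(s)|\,ds$ as ``the main obstacle'' and propose to retreat to discrete cascades with a level-by-level integration by parts. That detour can be made to work, but it is unnecessary. The paper dispatches this step in one line using the \emph{invariance property} of the Ruelle probability cascade (\cite[Theorem~4.4]{pan}, see also \cite[Proposition~4.8]{HJ_critical_pts}): under the tilted Gibbs measure $\E\langle\cdot\rangle_t$, the overlap $\alpha^1\wedge\alpha^2$ is still uniformly distributed on $[0,1]$. Hence
\[
\E\big\langle|(q-q')(\alpha^1\wedge\alpha^2)|\big\rangle_t=\int_0^1|(q-q')(s)|\,ds
\]
holds as an \emph{equality} for every $t$, and integrating $|\phi'(t)|$ over $t\in[0,1]$ finishes the proof immediately. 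Your discrete argument is effectively reproving a special case of this invariance by hand; once you know the invariance statement, no approximation or level-by-level bookkeeping is needed, and the sharp constant $\tfrac{1}{2}$ falls out automatically.
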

\begin{proof}
We write $\mu=\nabla\xi\circ\pi$ and $\mu'=\nabla\xi\circ\pi'$.
For $r\in[0,1]$, we set $\mu_r = (1-r)\mu+r\mu'$. Let $(w^\mu(\alpha))_{\alpha\in \supp\mathfrak{R}}$ and $(w^{\mu'}(\alpha))_{\alpha\in \supp\mathfrak{R}}$ be independent when conditioned on $\mathfrak{R}$. In view of~\eqref{e.cov_w^pi}, we have
\begin{align*}
    \sqrt{1-r} w^\mu(\alpha) + \sqrt{r}w^{\mu'}(\alpha) \stackrel{\d}{=} w^{\mu_r}(\alpha).
\end{align*}
Hence, in the definition of $\sP(\pi_r,x)$ (see~\eqref{e.sP(pi,x)}), we can replace $w^{\mu_r}(\alpha)$ by $\sqrt{1-r} w^\mu(\alpha) + \sqrt{r}w^{\mu'}(\alpha)$. We write $\sP(\pi_r,x) = f(\pi_r)+ \frac{1}{2}\int_0^1\theta\Ll(\pi_r(s)\Rr)\d s$. Then, we can compute
\begin{align*}
    \frac{\d}{\d r} f(\pi_r) &= \E \la \Ll(\frac{1}{2\sqrt{r}}w^{\mu'}(\alpha) -  \frac{1}{2\sqrt{1-r}}w^{\mu}(\alpha)\Rr)\cdot \tau -\frac{1}{2}\Ll(\mu'(1)-\mu(1)\Rr)\cdot\tau\tau^\intercal \ra
    \\
    & = \frac{1}{2}\E \la \Ll(\mu\Ll(\alpha\wedge\alpha'\Rr)-\mu'\Ll(\alpha\wedge\alpha'\Rr)\Rr)\cdot\tau\tau'^\intercal\ra
\end{align*}
where $\la\cdot\ra$ is the obvious Gibbs measure associated with $f(\pi_r)$ and we used the Gaussian integration by parts (see \cite[Theorem~4.6]{HJbook}) to derive the second equality.
By the invariance property of the Ruelle probability cascade \cite[Theorem~4.4]{pan} (and \cite[Proposition 4.8]{HJ_critical_pts}), $\alpha\wedge\alpha'$ is still distributed uniformly over $[0,1]$ under $\E\la\cdot\ra$. Using this and $\Ll|\tau\tau^\intercal\Rr|\leq 1$ due to the assumption on $P_1$, we get
\begin{align*}
    \Ll|\frac{\d}{\d r} f(\pi_r)\Rr|  \leq \frac{1}{2}\E \la\Ll|\mu(\alpha\wedge\alpha')-\mu'(\alpha\wedge\alpha')\Rr|\ra = \frac{1}{2}\int_0^1 \Ll|\mu(s)-\mu'(s)\Rr|\d s.
\end{align*}
The desired result follows from integration in $r$ from $0$ to $1$.
\end{proof}

We need some notation and simple results for matrices.
For $a \in\S^\D_+$, 
\begin{itemize}
    \item if $a\neq 0$, we denote by $m(a)$ the smallest \textit{positive} eigenvalue of $a$;
    \item by diagonalization, we have $\frac{1}{\sqrt{D}}\tr(a)\leq |a|\leq \tr(a)$ and also that $|a|$ is bounded by $\sqrt{D}$ times its largest eigenvalue;
    \item hence, if $a\in\S^\D_{++}$, then $\Ll|a^{-1}\Rr|\leq \sqrt{D}m(a)^{-1}$;
    \item we denote by $\sqrt{a}$ the matrix square root of $a$ and we have $\Ll|\sqrt{a}\Rr|=\sqrt{\tr(a)}\leq D^\frac{1}{4}\sqrt{|a|}$.
\end{itemize}
Due to $(a+b)\cdot(a-b) = |a|^2-|b|^2$, we have $|a|\geq |b|$ if $a\geqpsd b$.
The Powers--St\o rmer inequality \cite[Lemma~4.1]{powers1970free} gives $\Ll|\sqrt{a+b}-\sqrt{a}\Rr| \leq \sqrt{\tr(b)}$ for all $a,b\in\S^\D_+$.

For $z\in\S^\D_+$ and $r\in[0,\infty)$, we set
\begin{align}\label{e.K(z,r)=}
    K(z,r) =
    \begin{cases}
        (|z|+r)\Ll(1+|z|^\frac{1}{2}m(z)^{-\frac{1}{2}}\Rr)m(z)^{-\frac{1}{2}}r^\frac{1}{2} + \sqrt{(|z|+r)r}, &\quad z \in \S^\D_+\setminus\{0\},
        \\
        r, &\quad z =0.
    \end{cases}
\end{align}
When $z \in \S^\D_+\setminus\{0\}$, $z$ has a least one positive eigenvalue and thus $m(z)>0$ and thus $K(z,r)$ is always finite.

\begin{lemma}\label{e.pipi'}
There is a constant $C$ depending only on $\D$ such that for every $w, z\in\S^\D_+$ and $\pi\in\Pi(z+w)$, there is $\pi'\in \Pi(z)$ such that
\begin{align}\label{e.<CK}
    \int_0^1\Ll|\pi(s)-\pi'(s)\Rr|\d s \leq CK(z,|w|).
\end{align}
\end{lemma}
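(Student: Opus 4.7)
The plan is to construct $\pi'$ explicitly as a fixed linear congruence applied to $\pi$. Setting $A := z^{1/2}(z+w)^{\dagger 1/2}$, where ${}^{\dagger}$ denotes the Moore--Penrose pseudoinverse of the psd square root, I would define
\[
\pi'(s) \ := \ A\,\pi(s)\,A^\intercal \ = \ z^{1/2}(z+w)^{\dagger 1/2}\,\pi(s)\,(z+w)^{\dagger 1/2}z^{1/2},\qquad s\in[0,1].
\]
The first task is to verify $\pi' \in \Pi(z)$. Psd-valuedness, left-continuity, and the increasing property are inherited from $\pi$, because congruence by a fixed matrix preserves all three. For the endpoint, I would use that $z,w\in\S^\D_+$ force $\ker(z+w)\subset \ker(z)\cap\ker(w)$ (any $v$ in the left kernel gives $v^\intercal z v + v^\intercal w v = 0$ with both summands nonnegative), and therefore $\mathrm{range}(z)\subset\mathrm{range}(z+w)$. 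Consequently $P:=(z+w)^{1/2}(z+w)^{\dagger 1/2}$, the orthogonal projection onto $\mathrm{range}(z+w)$, satisfies $P z^{1/2}=z^{1/2}$, which yields $\pi'(1)=z^{1/2} P z^{1/2}=z$.

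For the main estimate, I would exploit the same inclusion $\mathrm{range}(\pi(s))\subset\mathrm{range}(z+w)$ (from $\pi(s)\leqpsd z+w$) to write $\pi(s)=(z+w)^{1/2}S(s)(z+w)^{1/2}$ with $S(s):=(z+w)^{\dagger 1/2}\pi(s)(z+w)^{\dagger 1/2}$ satisfying $0\leqpsd S(s)\leqpsd P\leqpsd I$, so $|S(s)|\le\sqrt{\D}$. Setting $T:=(z+w)^{1/2}-z^{1/2}$ (symmetric, with $|T|\le\sqrt{\tr(w)}\le\D^{1/4}|w|^{1/2}$ by the Powers--St\o rmer inequality recalled in the preamble), expansion produces the algebraic identity
\[
\pi(s)-\pi'(s) \ = \ z^{1/2}S(s)T+TS(s)z^{1/2}+TS(s)T.
\]
Submultiplicativity of the Frobenius norm together with $|z^{1/2}|\le\D^{1/4}|z|^{1/2}$ then gives the pointwise bound $|\pi(s)-\pi'(s)|\le C_\D\bigl(\sqrt{|z||w|}+|w|\bigr)$, independent of $s$. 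Integrating in $s$ and using $\sqrt{|z||w|}+|w|\le\sqrt{2}\sqrt{(|z|+|w|)|w|}\le\sqrt{2}\,K(z,|w|)$ gives the claimed bound, with a $\D$-dependent constant. The degenerate case $z=0$ is handled uniformly: then $A=0$, so $\pi'\equiv 0$ and $\int_0^1|\pi(s)|\,\d s\le|\pi(1)|=|w|=K(0,|w|)$.

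The only conceptual subtlety is justifying the pseudoinverse manipulations when $z+w$ is rank-deficient; the range inclusions above handle this, since $(z+w)^{\dagger 1/2}(z+w)^{1/2}$ acts as the identity on every vector it is actually applied to (namely vectors in $\mathrm{range}(z+w)$). Notably, this construction is strictly stronger than the lemma as stated, because the $m(z)^{-1/2}$ factors present in $K(z,|w|)$ never appear in my bound. I would remain open to the possibility that the authors prefer a construction that produces those factors naturally (perhaps tuned for a sharper pointwise-in-$s$ estimate or for a refined variant used elsewhere), but for the $L^1$-in-$s$ claim made here the pseudoinverse congruence is the cleanest route.
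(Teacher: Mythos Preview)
Your proof is correct, and the underlying construction $\pi'(s)=A\pi(s)A^\intercal$ with $A=z^{1/2}(z+w)^{-1/2}$ is the same one the paper uses (in the invertible case, their $h$ is exactly your $A$). Where you diverge is in how you handle rank deficiency and how you estimate the difference. The paper treats $z\in\S^\D_{++}$ first and then, for general $z\in\S^\D_+\setminus\{0\}$, diagonalizes $z$, restricts to the $d\times d$ block on which $z$ is invertible, applies the invertible-case argument there, and separately bounds the off-block entries of $\pi$ via $|\pi_{ij}|\le\sqrt{\pi_{ii}\pi_{jj}}$. Their estimate in the invertible block goes through $|\pi-\pi'|\le|\pi|(1+|h|)|\identity-h|$, and since $|h|$ and $|\identity-h|$ each carry a factor of $\bigl|(\sqrt{z+w})^{-1}\bigr|\lesssim m(z)^{-1/2}$, this is what produces the $m(z)^{-1/2}$ terms in $K(z,\cdot)$; the $\sqrt{(|z|+r)r}$ term of $K$ comes from the off-block piece.

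Your pseudoinverse treatment absorbs the rank-deficient case uniformly, and your decomposition $\pi(s)=(z+w)^{1/2}S(s)(z+w)^{1/2}$ with $0\leqpsd S(s)\leqpsd\identity$ is genuinely sharper: by passing the pseudoinverse into $S$ you never need to control $\bigl|(z+w)^{\dagger 1/2}\bigr|$ on its own, and the resulting bound $C_\D\sqrt{(|z|+|w|)|w|}$ matches only the \emph{second} summand of $K(z,|w|)$. So you have in fact proved more than the lemma asserts, and your remark to that effect is accurate. For the paper's purposes this makes no difference---$K(z,r)$ is only ever used as an upper bound that vanishes as $r\to0$ with $z$ fixed (in Lemma~\ref{l.infP>infP} and its applications)---but your route is both shorter and tighter.
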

\begin{proof}
If $z=0$, then $\Pi(z)$ contains only the path constantly equal to zero and \eqref{e.<CK} holds.

First, we consider the case where $z\in\S^\D_{++}$.
We set $h= \sqrt{z}\Ll(\sqrt{z+w}\Rr)^{-1}$ and define $\pi'(s) = h \pi(s)h^\intercal$ for $s\in[0,1]$. It is immediate that $\pi'\in\Pi$. Due to $h(z+w)h^\intercal =z$, we have $\pi'\in \Pi(z)$. Then, we compute
\begin{align*}
    \Ll|\pi-\pi'\Rr|\leq \Ll|\pi-h\pi\Rr|+\Ll|h\pi-h\pi h^\intercal\Rr| \leq |\pi||\identity-h|+|h||\pi||\identity-h|=|\pi|(1+|h|)|\identity-h|,
\end{align*}
Due to $\pi\leqpsd z+w$, we have $|\pi|\leq |z|+|w|$. Due to $m\Ll(\sqrt{z+w}\Rr) = m\Ll(z+w\Rr)^\frac{1}{2}\geq m(z)^\frac{1}{2}$, we can bound
\begin{align*}
    |h|\leq \Ll|\sqrt{z}\Rr|\Ll|\Ll(\sqrt{z+w}\Rr)^{-1}\Rr| \leq \Ll(D^\frac{1}{4}|z|^\frac{1}{2}\Rr)\Ll(\sqrt{D}m\Ll(\sqrt{z+w}\Rr)^{-1}\Rr)\leq D^\frac{3}{4}|z|^\frac{1}{2}m(z)^{-\frac{1}{2}}.
\end{align*}
Similarly,
\begin{align*}
    \Ll|\identity-h\Rr|&=\Ll|\Ll(\sqrt{z+w}-\sqrt{z}\Rr)\Ll(\sqrt{z+w}\Rr)^{-1}\Rr|\leq \sqrt{\tr(w)}\sqrt{D}m\Ll(\sqrt{z+w}\Rr)^{-1}
     \leq  D^\frac{3}{4}m(z)^{-\frac{1}{2}}|w|^\frac{1}{2}.
\end{align*}
Combining these, we get \eqref{e.<CK} if $z\in\S^\D_{++}$.

Next, we consider the general case $z\in\S^\D_{+}\setminus\{0\}$. By applying an orthogonal transformation, we can assume $z = \diag(\lambda_1,\dots,\lambda_\D)$ for $\lambda_1\geq\dots\geq \lambda_\D$. Set $d = \max\{i: \lambda_i>0\}$. For every $a\in\R^{\D\times \D}$, we write $\check a =(a_{ij})_{1\leq i,j\leq d}$. Then, we have $\check z\in\S^d_{++}$, $|\check z|= |z|$, and $m(\check z) = m(z)$. We set $h= \sqrt{\check z}\Ll(\sqrt{\check z +\check w}\Rr)^{-1}$ and $p(s) = h\check \pi(s) h^\intercal $ for $s\in[0,1]$. Then, by the same argument as above,
\begin{align*}
    \int_0^1\Ll|\check\pi(s)-p(s)\Rr|\d s \leq C K\Ll(\check z,\Ll|\check w\Rr|\Rr)\leq C K(z,|w|)
\end{align*}
for some $C$ depending only on $\D$. For $b\in \R^{d\times d}$, we write $\hat b = \diag(b,\mathbf{0})$ where $\mathbf{0}$ is the $(D-d)\times (D-d)$ zero matrix. We define $\pi' = \hat p$. Then,
\begin{align*}
    \int_0^1\Ll|\pi(s)-\pi'(s)\Rr|\d s  \leq  \int_0^1\Ll|\pi(s)-\hat{\check \pi}(s)\Rr|\d s + \int_0^1\Ll|\check\pi(s)-p(s)\Rr|\d s .
\end{align*}
To estimate $\pi-\hat{\check \pi}$, we only need to look at $\pi_{ij}$ with $\max\{i,j\}\geq d+1$. For every $i\in\{1,\dots,\D\}$, we have $\pi_{ii}\leq |\pi|\leq |z|+|w|$. If $i\geq d+1$, then $\pi_{ii} \leq w_{ii}\leq |w|$. 
Since $\pi$ is $\S^\D_+$-valued, we have $|\pi_{ij}|\leq \sqrt{\pi_{ii}\pi_{jj}}$ for every $i,j$.
Hence, if $\max\{i,j\}\geq d+1$, then $|\pi_{ij}|\leq \sqrt{(|z|+|w|)|w|}$. This along with the above displays yields \eqref{e.<CK}.
\end{proof}

\begin{lemma}\label{l.infP>infP}
For every $R>0$, there is a constant $C$ such that for every $y\in\S^\D$ and $z,z'\in \S^\D_{+}$ satisfying $|z|,\,|z'|\leq R$,
\begin{align*}
    \inf_{\pi\in\Pi(z')}\sP(\pi,y) - \inf_{\pi\in\Pi(z)}\sP(\pi,y) \geq- CK\Ll(z,\Ll|z-z'\Rr|\Rr).
\end{align*}
\end{lemma}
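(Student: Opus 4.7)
The plan is to show $\inf_{\pi\in\Pi(z)}\sP(\pi,y)\leq \inf_{\pi\in\Pi(z')}\sP(\pi,y)+CK(z,|z-z'|)$ by constructing, for each $\pi'\in\Pi(z')$, a nearby path $\pi\in\Pi(z)$ whose Parisi functional is controlled via Lemma~\ref{l.Lip_P(pi,x)}. The main obstacle is that Lemma~\ref{e.pipi'} only relates paths whose endpoints are PSD-ordered, whereas $z'-z$ is generally indefinite, so no direct transport from $\Pi(z')$ to $\Pi(z)$ is available in one step.

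To bypass this I would introduce the common PSD-upper bound $\bar z:=z+(z'-z)_+$, where $a_+$ denotes the positive part of $a\in\S^\D$ from its spectral decomposition. Then $\bar z-z=(z'-z)_+$ and $\bar z-z'=(z'-z)_-$ both lie in $\S^\D_+$ with norms at most $|z-z'|$ (using $|a|^2=|a_+|^2+|a_-|^2$ for $a\in\S^\D$), and $|\bar z|\leq 3R$ by the triangle inequality.

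Given $\pi'\in\Pi(z')$ and $\delta>0$, by the left-continuity of $\pi'$ at $s=1$ I would pick $\eps=\eps(\pi',\delta)>0$ so that $\int_{1-\eps}^1|\bar z-\pi'(s)|\,\d s\leq \delta$, and set $\bar\pi=\pi'$ on $[0,1-\eps]$ and $\bar\pi\equiv \bar z$ on $(1-\eps,1]$. Monotonicity of $\bar\pi$ holds since $\pi'(s)\leqpsd z'\leqpsd \bar z$, and left-continuity is inherited at $s=1-\eps$ from $\pi'$ and is automatic on $(1-\eps,1]$ where $\bar\pi$ is constant, so $\bar\pi\in\Pi(\bar z)$ with $\int_0^1|\bar\pi-\pi'|\,\d s\leq\delta$. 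Lemma~\ref{e.pipi'} applied with $w=\bar z-z\in\S^\D_+$ then produces $\pi\in\Pi(z)$ satisfying $\int_0^1|\pi-\bar\pi|\,\d s\leq CK(z,|w|)\leq CK(z,|z-z'|)$, where the last step uses the manifest monotonicity of $r\mapsto K(z,r)$.

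All three paths take values in the ball of radius $3R$, on which $\nabla\xi$ and $\theta$ are Lipschitz by~\ref{i.xi_loc_lip} and the definition~\eqref{e.theta} of $\theta$. Two applications of Lemma~\ref{l.Lip_P(pi,x)}, first to $(\pi',\bar\pi)$ and then to $(\bar\pi,\pi)$, combine to give $|\sP(\pi,y)-\sP(\pi',y)|\leq L(R)\bigl(CK(z,|z-z'|)+\delta\bigr)$. Taking the infimum over $\pi'\in\Pi(z')$ and letting $\delta\to 0$ yields the claim with a constant depending only on $R$. The main technical subtlety is that the $\eps$-approximation must depend on $\pi'$ because the left-continuity constraint at $s=1$ forbids a clean jump to $\bar z$ at the endpoint; since $\delta$ is chosen independently of $\pi'$, the resulting error $L(R)\delta$ is sent to zero after the infimum and causes no harm.
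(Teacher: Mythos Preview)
Your proof is correct and follows essentially the same approach as the paper: lift $\pi'\in\Pi(z')$ to a path ending at a common PSD-upper bound of $z$ and $z'$ by a short jump near $s=1$, then descend to $\Pi(z)$ via Lemma~\ref{e.pipi'} and control the functional change through Lemma~\ref{l.Lip_P(pi,x)} and the local Lipschitzness of $\nabla\xi,\theta$. The only cosmetic difference is your choice of upper bound $\bar z=z+(z'-z)_+$ in place of the paper's $z+|z-z'|\identity_\D$; both satisfy $\bar z\geqpsd z,z'$ with $|\bar z-z|$ controlled by $|z-z'|$, so the arguments are interchangeable.
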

\begin{proof}
Set $w =\Ll|z-z'\Rr|\identity$. For $a\in\S^\D_+$, we have $a\cdot(z+w)\geq a\cdot z+|a||z-z'| \geq a\cdot z'$, which by \eqref{e.a.b>0} implies $z+w \geqpsd z'$. Hence, for any $\eps>0$ and every $\pi\in\Pi(z')$, we can take $\pi^\eps = \pi\mathds{1}_{[0,1-\eps]}+(z+w)\mathds{1}_{(1-\eps,1]}$ to satisfy $\pi^\eps\in\Pi(z+w)$. Using this and Lemma~\ref{l.Lip_P(pi,x)}, we can obtain
\begin{align*}
     \inf_{\pi\in\Pi(z')}\sP(\pi,y) \geq \inf_{\pi\in\Pi(z+w)}\sP(\pi,y).
\end{align*}
Notice that $|\pi|\leq R+2\sqrt{D}R$ for all $\pi\in \Pi(z+w)$ and $\Ll|\pi'\Rr|\leq R$ for all $\pi'\in \Pi(z)$.
The announced result follows from this, the above display, Lemma~\ref{l.Lip_P(pi,x)} (together with the local Lipschitzness of $\nabla\xi$ and $\theta$ due to \ref{i.xi_loc_lip}) , and Lemma~\ref{e.pipi'}.
\end{proof}

\subsection{Proof of the equivalence}

We start with the following lemma.

\begin{lemma}\label{l.HJ_S^D}
It holds that
\begin{align*}\sup_{z\in\S^\D_+}\inf_{y\in\S^\D_+,\,\pi\in\Pi}\Ll\{\mathscr{P}(\pi,y) -y\cdot z+\frac{1}{2}\xi(z)\Rr\} = \sup_{z\in\S^\D}\inf_{y\in\S^\D,\,\pi\in\Pi}\Ll\{\mathscr{P}(\pi,y) -y\cdot z+\frac{1}{2}\xi(z)\Rr\}.
\end{align*}
\end{lemma}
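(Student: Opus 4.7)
The plan is to establish the equality $L = R$ (where $L$ and $R$ denote the LHS and RHS of the claimed identity) by proving both directions separately. First, I will reduce the outer supremum on $R$ to $z\in\S^\D_+$: for $z\in\S^\D\setminus\S^\D_+$, I pick $b\in\S^\D_+$ with $b\cdot z<0$ using \eqref{e.a.b>0}; since $\nabla\sP\in\S^\D_+$ by \eqref{e.nabla_sP>0}, the function $\sP$ is nondecreasing in the $\S^\D_+$-order, so $\sP(-tb)\leq \sP(0)$ for $t>0$; combined with $\sP(y)=\inf_\pi \sP(\pi,y)$, this gives $\inf_{y\in\S^\D,\pi}\Ll\{\sP(\pi,y)-y\cdot z\Rr\}\leq \sP(0)+tb\cdot z\to-\infty$ as $t\to\infty$. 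Thus the outer supremum on $R$ is effectively over $z\in\S^\D_+$. The direction $R\leq L$ then follows immediately, since with both outer supremums restricted to the common set $\S^\D_+$, the inner infimum on $R$ is taken over the larger set $y\in\S^\D\supseteq\S^\D_+$.

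For the reverse direction $L\leq R$, my plan is to use Fenchel--Moreau duality to recast both sides in a common ``$y$-formulation''. The paper's remark following \eqref{e.hj.parisi} (which invokes \cite[Theorem~1.2]{chen2023self}) already gives
\[
L=\sup_{y\in\S^\D_+}\Ll\{\sP(y)-\tfrac{1}{2}\xi^*(2y)\Rr\},\qquad \xi^*(w)=\sup_{z\in\S^\D_+}\{w\cdot z-\xi(z)\}.
\]
For $R$, I would write $R=\sup_{z\in\S^\D_+}\Ll\{-\sP^*(z)+\tfrac{1}{2}\xi(z)\Rr\}$ with $\sP^*(z)=\sup_{y\in\S^\D}\{y\cdot z-\sP(y)\}$, noting that $\sP^*=+\infty$ off $\S^\D_+$ by the same monotonicity argument used in the first paragraph (taking $y=-tb$). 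I then substitute the Fenchel--Moreau identity $\xi(z)/2=\sup_{y\in\S^\D}\Ll\{y\cdot z-\tfrac{1}{2}\xi^*(2y)\Rr\}$, valid for $z\in\S^\D_+$ by convexity and lower semicontinuity of $\xi$ on $\S^\D_+$, swap the two suprema (trivial for sup-sup), and apply the biduality $\sP^{**}=\sP$ to the inner sup $\sup_{z\in\S^\D_+}\{y\cdot z-\sP^*(z)\}=\sup_{z\in\S^\D}\{y\cdot z-\sP^*(z)\}=\sP(y)$. This will yield $R=\sup_{y\in\S^\D}\Ll\{\sP(y)-\tfrac{1}{2}\xi^*(2y)\Rr\}$, and since $\S^\D_+\subset\S^\D$, the inequality $L\leq R$ follows immediately from the nesting of the $y$-sups in the two representations.

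Combining the two directions yields $L=R$. The main obstacle I expect is the Fenchel--Moreau calculation in the second paragraph: verifying the required lower semicontinuity and convexity (for $\sP$, these follow from Lemma~\ref{l.prop_F_N}; for $\xi|_{\S^\D_+}$, from \ref{i.xi_loc_lip} together with the assumed convexity on $\S^\D_+$), confirming $\sP^*=+\infty$ off $\S^\D_+$ via the monotonicity argument, and carefully handling the sup-sup swap and the biduality on the closed convex cone $\S^\D_+$. Once these routine pieces are in place, the chain of equalities reduces $L\leq R$ to the trivial set-inclusion comparison.
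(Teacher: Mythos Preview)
Your approach is correct and genuinely different from the paper's. The paper proves the lemma via Hamilton--Jacobi theory: it identifies $R$ (at $(t,x)=(\tfrac12,0)$) as the Hopf-formula viscosity solution of $\partial_t f-\xi(\nabla f)=0$ on $(0,\infty)\times\S^\D$, then shows this solution is Lipschitz and $\S^\D_+$-monotone (here \eqref{e.nabla_sP>0} enters, just as in your argument), and finally invokes a uniqueness/representation result on the cone \cite[Proposition~5.3]{chen2023self} to conclude that the restriction to $\S^\D_+$ admits the second Hopf representation, namely $L$. Your route bypasses viscosity solutions entirely and works purely through Fenchel--Moreau duality: the inequality $R\le L$ via monotonicity of $\sP$ and domain inclusion, and $L\le R$ by rewriting both sides as $\sup_y\{\sP(y)-\tfrac12\xi^*(2y)\}$ over $\S^\D_+$ and $\S^\D$ respectively. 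Each approach has its appeal: the PDE argument is conceptually uniform (everything is ``the same viscosity solution''), while yours is more elementary and makes the role of convexity of $\xi$ on $\S^\D_+$ completely transparent.

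One refinement worth making: you invoke the identity $L=\sup_{y\in\S^\D_+}\{\sP(y)-\tfrac12\xi^*(2y)\}$ by citing the remark after \eqref{e.hj.parisi} and \cite[Theorem~1.2]{chen2023self}. In the context of Section~\ref{s.equiv}, whose stated aim is a verification independent of the free-energy limit, this citation is a little precarious, since that remark is phrased in terms of $\lim_N F^\std_N$. But you do not actually need the full equality; the inequality $L\le\sup_{y\in\S^\D_+}\{\sP(y)-\tfrac12\xi^*(2y)\}$ suffices, and it follows from exactly the same Fenchel manipulation you perform for $R$, applied instead to $\tilde\sP$ (equal to $\sP$ on $\S^\D_+$ and $+\infty$ off it): write $L=\sup_{z\in\S^\D_+}\{-\tilde\sP^*(z)+\tfrac12\xi(z)\}$, substitute the biconjugate expression for $\tfrac12\xi$, swap the two suprema, and bound $\sup_{z\in\S^\D_+}\{y\cdot z-\tilde\sP^*(z)\}\le\tilde\sP^{**}(y)=\tilde\sP(y)$. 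With this tweak your argument is fully self-contained.
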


\begin{proof}
We consider the Hopf formula
\begin{align}\label{e.Hopf_S^D}
    f(t,x) = \sup_{z\in\S^\D}\inf_{y\in\S^\D}\{\sP(y)+ z\cdot(x-y)+t\xi(z)\},\quad\forall (t,x)\in [0,\infty)\times \S^\D.
\end{align}
Since $\sP$ is convex due to Lemma~\ref{l.prop_F_N}, we can use the standard result on the Hopf formula \cite[Proposition~1]{lions1986hopf} to deduce that $f$ is the unique viscosity solution of
\begin{align*}\partial_t f- \xi(\nabla f) =0 \quad \text{on $\Ll(0,\infty\Rr)\times \S^\D$}
\end{align*}
with initial condition $f(0,\cdot)=\sP$ on $\S^\D$. For the definition of viscosity solutions, we refer to the paragraph below \cite[(5.5)]{chen2023self}.

For clarity, let us explain how to apply \cite[Proposition~1]{lions1986hopf}. We substitute $\S^\D,\, f,\, \sP,\, -\xi$ for $\R^N,\, u,\,u_0,\, H$ therein (we identify $\S^\D$ with $\R^N$ for $N= D(D+1)/2$). The assumption \cite[(8)]{lions1986hopf} holds because the Lipschitzness of $\sP$ (see Lemma~\ref{l.prop_F_N}) implies that its convex conjugate $\sP^*=+\infty$ outside a bounded set.

To proceed, we want to show that $f$ is Lipschitz and increasing. Fix any $(t,x)$ and $(t',x')$. For any $\delta>0$, we choose $z_\delta\in\S^\D$ to satisfy
\begin{align*}
    f(t,x) \leq \delta + \inf_{y\in\S^\D}\Ll\{\sP(y)+z_\delta\cdot(x-y)+t\xi(z_\delta)\Rr\}.
\end{align*}
Using~\eqref{e.y_eps_exists} and~\eqref{e.liminf<} in Lemma~\ref{l.y_eps}, we have that, for every sufficiently small $\eps\in(0,1)$, there is $y_\eps$ such that
\begin{align*}
    f(t',x') &\geq \inf_{y\in\S^\D}\Ll\{\sP(y)+z_\delta\cdot(x'-y)+t'\xi(z_\delta)\Rr\}
    \\
    & \geq -\delta + \inf_{y\in\S^\D}\Ll\{\sP(y)+\eps\sqrt{1+|y|^2}-z_\delta\cdot y\Rr\}+z_\delta \cdot x' + t'\xi(z_\delta)
    \\
    & \geq -\delta +\sP(y_\eps)+z_\delta\cdot(x'-y_\eps)+t'\xi(z_\delta).
\end{align*}
These together yield
\begin{align}\label{e.f(t,x)-f(t',x')}
    f(t,x)-f(t',x')\leq 2\delta +z_\delta\cdot(x-x')+\xi(z_\delta)(t-t').
\end{align}
Also, \eqref{e.|z-nabla_f|<eps} in Lemma~\ref{l.y_eps} implies that
\begin{align}\label{e.|z_delta-..|}
    \Ll|z_\delta - \nabla \sP(y_\eps)\Rr|\leq \eps \leq 1.
\end{align}
This along with \eqref{e.f(t,x)-f(t',x')}, the Lipschitzness of $\sP$ (see Lemma~\ref{l.prop_F_N}) and the local Lipschitzness of $\xi$ (see \ref{i.xi_loc_lip}) implies that 
\begin{align*}
    f(t,x)-f(t',x') \leq 2\delta+ C\Ll(|x-x'|+|t-t'|\Rr)
\end{align*}
for some constant $C$ depending only on $\|\sP\|_\mathrm{Lip}$ and $\xi$. Sending $\delta\to0$, we conclude that $f$ is Lipschitz.

Now, we assume that $t\leq t'$ and $x\leqpsd x'$. Recall $\nabla \sP(y_\eps)\in\S^\D_+$ due to  \eqref{e.nabla_sP>0}.
Using this, \eqref{e.a.b>0}, the first property in \ref{i.xi_sym}, and the local Lipschitzness of $\xi$, we obtain from \eqref{e.f(t,x)-f(t',x')} and \eqref{e.|z_delta-..|} that
\begin{align*}
    f(t,x) - f(t',x')&\leq 2\delta + \nabla \sP(y_\eps)\cdot (x-x') + \xi\Ll(\nabla\sP(y_\eps)\Rr)(t-t') + \eps|x-x'|+C\eps|t-t'|
    \\
    & \leq 2\delta + \eps|x-x'|+C\eps|t-t'|
\end{align*}
for some constant $C$ depending only on $\|\sP\|_\mathrm{Lip}$ and $\xi$. Sending first $\eps\to0$ and then $\delta\to0$, we obtain the following monotonicity:
\begin{align*}
    t\leq t',\, x\leqpsd x'\quad\implies \quad f(t,x)\leq f(t',x').
\end{align*}

Let $\S^\D_{++}$ be the set of positive definite matrices which is the interior of $\S^\D_+$ in $\S^\D$. By the definition of viscosity solutions, $f$ also solves
\begin{align}\label{e.PDE_cone}
    \partial_t f- \xi(\nabla f) =0 \quad \text{on $\Ll(0,\infty\Rr)\times \S^\D_{++}$}.
\end{align}
The Lipschitzness and the monotonicity proved above allow us to invoke \cite[Proposition~5.3]{chen2023self} to conclude that the restriction of $f$ to $[0,\infty)\times \S^\D_+$ is the unique viscosity solution of~\eqref{e.PDE_cone} with initial condition $f(0,\cdot) = \sP$ on $\S^\D_+$. Moreover, by \cite[Proposition~5.3]{chen2023self}, the convexity of $\sP$ gives the Hopf representation
\begin{align*}
    f(t,x) = \sup_{z\in\S^\D_+}\inf_{y\in\S^\D_+}\{\sP(y)+ z\cdot(x-y)+t\xi(z)\},\quad\forall (t,x)\in [0,\infty)\times \S^\D_+.
\end{align*}
Setting $(t,x) = (\frac{1}{2},0)$ in the above display and in \eqref{e.Hopf_S^D}, and equating the two, we obtain the desired result.
\end{proof}

Next, we prove an identity that is interesting on its own.

\begin{lemma}\label{l.infP-yz=infP-yz}
If $\xi$ is convex on $\S^D_+$, then, for every $z\in \S^\D_{+}$,
\begin{align}\label{e.infP-yz=infP-yz}
     \inf_{y\in\S^\D,\pi\in\Pi}\Ll\{\sP(\pi,y)-y\cdot z\Rr\} = \inf_{y\in \S^\D,\pi\in \Pi(z)}\Ll\{\sP(\pi,y) -y\cdot z \Rr\} .
\end{align}
\end{lemma}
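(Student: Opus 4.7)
The inequality $\leq$ is immediate since $\Pi(z) \subset \Pi$, and by the first equality in \eqref{e.P(x)=infP=infP} the right-hand side of~\eqref{e.infP-yz=infP-yz} equals $\inf_{y}\{\sP(y)-y\cdot z\}$. So the plan reduces to showing $\inf_{y,\pi\in\Pi(z)}\{\sP(\pi,y)-y\cdot z\}\leq \inf_{y}\{\sP(y)-y\cdot z\}$, and the strategy is to split into two cases according to whether the right-hand side is finite or equals $-\infty$. In both cases I will choose a near-minimizer $y_\eps$ of (a regularization of) $y\mapsto\sP(y)-y\cdot z$, invoke the second equality in~\eqref{e.P(x)=infP=infP} at $y_\eps$ to obtain a near-optimal path in $\Pi(\nabla\sP(y_\eps))$, and transfer it to $\Pi(z)$ using Lemma~\ref{l.infP>infP}. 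The key enabling observation is that $|\nabla\sP(y)|\leq \|\sP\|_{\mathrm{Lip}}\leq 1$ by Lemma~\ref{l.prop_F_N}, which lets me apply Lemma~\ref{l.infP>infP} with a fixed radius $R=|z|\vee 1$ independent of $y$.

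For Case~1, assume $\inf_{y}\{\sP(y)-y\cdot z\}>-\infty$. Since $\sP$ is differentiable on $\S^\D$ by Proposition~\ref{p.P_diff}, Lemma~\ref{l.y_eps} produces, for each $\eps>0$, a point $y_\eps$ attaining $\inf_{y}\{\sP(y)+\eps\sqrt{1+|y|^2}-y\cdot z\}$ with $|\nabla\sP(y_\eps)-z|\leq\eps$. Combining the second equality in~\eqref{e.P(x)=infP=infP} at $y_\eps$ with Lemma~\ref{l.infP>infP} (applied with $R=|z|\vee 1$) will then yield
\begin{align*}
\inf_{\pi\in\Pi(z)}\sP(\pi,y_\eps) \leq \sP(y_\eps)+CK(z,\eps),
\end{align*}
up to an arbitrarily small slack. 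The explicit formula~\eqref{e.K(z,r)=} shows $K(z,\eps)\to 0$ as $\eps\to 0$. Writing
\begin{align*}
\sP(y_\eps)-y_\eps\cdot z = \inf_{y}\Ll\{\sP(y)+\eps\sqrt{1+|y|^2}-y\cdot z\Rr\}-\eps\sqrt{1+|y_\eps|^2}
\end{align*}
and using $\eps\sqrt{1+|y_\eps|^2}\geq 0$ together with~\eqref{e.liminf<}, I can conclude Case~1 by sending $\eps\to 0$.

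For Case~2, assume $\inf_{y}\{\sP(y)-y\cdot z\}=-\infty$ and take any sequence $(y_n)$ with $\sP(y_n)-y_n\cdot z\to-\infty$. Here $|\nabla\sP(y_n)-z|$ need not be small, but the uniform bound $|\nabla\sP(y_n)|\leq 1$ gives $|\nabla\sP(y_n)-z|\leq |z|+1$. Applying the same recipe with Lemma~\ref{l.infP>infP} and the monotonicity of $K(z,\cdot)$ in its second argument (clear from~\eqref{e.K(z,r)=}) produces
\begin{align*}
\inf_{\pi\in\Pi(z)}\sP(\pi,y_n)-y_n\cdot z \leq \sP(y_n)-y_n\cdot z + 1 + CK(z,|z|+1)\to -\infty,
\end{align*}
so both sides of~\eqref{e.infP-yz=infP-yz} equal $-\infty$.

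The main obstacle I anticipate is controlling the path-perturbation error $CK(z,|\nabla\sP(y)-z|)$ from Lemma~\ref{l.infP>infP}: in Case~1 it must vanish as $\eps\to 0$, while in Case~2 it only needs to stay bounded. Both follow directly from the explicit form~\eqref{e.K(z,r)=} (including at boundary points $z\in \S^\D_+\setminus\S^\D_{++}$, where $m(z)$ appearing in $K$ still gives a finite expression), and the uniform Lipschitz bound $|\nabla\sP|\leq 1$ is what allows a single constant $R=|z|\vee 1$ to work throughout the argument.
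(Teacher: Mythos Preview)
Your argument is correct and follows the same two-case strategy as the paper, using Lemma~\ref{l.y_eps}, the relation~\eqref{e.P(x)=infP=infP}, and Lemma~\ref{l.infP>infP} in the same way; note only the slip in your opening sentence, where it is the \emph{left}-hand side of~\eqref{e.infP-yz=infP-yz} (not the right) that equals $\inf_y\{\sP(y)-y\cdot z\}$ via the first equality in~\eqref{e.P(x)=infP=infP}, consistent with how you then proceed. Your Case~2 is in fact slightly cleaner than the paper's: you take an arbitrary divergent sequence and use the uniform bound $|\nabla\sP|\leq 1$ directly, whereas the paper routes through the auxiliary Lemma~\ref{l.-infty_y_eps} with a quadratic regularization to produce a minimizer $y_\eps$ that it does not really need.
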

\begin{proof}
Fix any $z\in\S^\D_{+}$ and we distinguish two cases.

\textit{Case~1: the left-hand side in~\eqref{e.infP-yz=infP-yz} is bounded below.}
Applying Lemma~\ref{l.y_eps} to the function $\sP$ and using~\eqref{e.P(x)=infP=infP}, for a vanishing error $o_\eps(1)$ as $\eps\to\infty$, we have that
\begin{align}
    \inf_{y\in\S^\D,\pi\in\Pi}\Ll\{\sP(\pi,y)-y\cdot z\Rr\}    &\stackrel{\eqref{e.P(x)=infP=infP}}{=}\inf_{y\in\S^\D}\Ll\{\sP(y)-y\cdot z\Rr\}\notag
    \\
    &\stackrel{\eqref{e.liminf<}}{=}\inf_{y\in \S^\D}\Ll\{\sP(y) +\eps\sqrt{1+|y|^2} -y\cdot z\Rr\} +o_\eps(1)\notag
    \\
    &\stackrel{\eqref{e.y_eps_exists}}{\geq} \sP(y_\eps) -y_\eps \cdot z +o_\eps(1)\notag
    \\
&\stackrel{\eqref{e.P(x)=infP=infP}}{=} \inf_{\pi \in \Pi(\nabla\sP(y_\eps)) }\sP(\pi, y_\eps) -y_\eps\cdot z+o_\eps(1). \label{e.inf=>>=}
\end{align}
Recall $K(z,r)$ defined in \eqref{e.K(z,r)=}.
By Lemma~\ref{l.infP>infP} and~\eqref{e.|z-nabla_f|<eps}, we get
\begin{align*}
    \inf_{\pi \in \Pi(\nabla\sP(y_\eps)) }\sP(\pi, y_\eps) - \inf_{\pi \in \Pi(z) }\sP(\pi, y_\eps)\geq -CK\Ll(z,\Ll|z-\nabla\sP(y_\eps)\Rr|\Rr) =  o_\eps(1).
\end{align*}
Inserting this to~\eqref{e.inf=>>=} and sending $\eps\to0$, we obtain
\begin{align*}
     \inf_{y\in\S^\D,\pi\in\Pi}\Ll\{\sP(\pi,y)-y\cdot z\Rr\} \geq \inf_{y\in \S^\D,\pi\in \Pi(z)}\Ll\{\sP(\pi,y) -y\cdot z \Rr\}  .
\end{align*}
Due to $\Pi\supset \Pi(z)$, the other direction trivially holds, which gives~\eqref{e.infP-yz=infP-yz}.

\textit{Case~2: the left-hand side in~\eqref{e.infP-yz=infP-yz} is equal to $-\infty$.}
We show that the right-hand side is also equal to $-\infty$.
Fix any $M>0$. 
Applying Lemma~\ref{l.-infty_y_eps} to the function $\sP$ and using~\eqref{e.P(x)=infP=infP}, we have that, for sufficiently small $\eps$,
\begin{align}
    -M &\stackrel{\eqref{e.-infty_liminf<}}{\geq}\inf_{y\in \S^\D}\Ll\{\sP(y) +\eps|y|^2-y\cdot z\Rr\}  \notag
    \\
    &\stackrel{\eqref{e.-infty_y_eps_exists}}{\geq} \sP(y_\eps) -y_\eps \cdot z \notag
    \\
    &\stackrel{\eqref{e.P(x)=infP=infP}}{=} \inf_{\pi \in \Pi(\nabla\sP(y_\eps)) }\sP(\pi, y_\eps) -y_\eps\cdot z. \label{e.-M_inf=>>=}
\end{align}
By Lemma~\ref{l.prop_F_N}, we can bound $|\nabla \sP(y_\eps)-z|\leq \|\sP\|_\mathrm{Lip}+|z|$. Hence, Lemma~\ref{l.infP>infP} gives the existence of a constant $C>0$ independent of $M$ and $\eps$ such that
\begin{align*}
    \inf_{\pi \in \Pi(\nabla\sP(y_\eps)) }\sP(\pi, y_\eps) \geq  \inf_{\pi \in \Pi(z) }\sP(\pi, y_\eps) -C.
\end{align*}
Inserting this to~\eqref{e.-M_inf=>>=}, we obtain
\begin{align*}
     -M+C \geq \inf_{y\in \S^\D,\pi\in \Pi(z)}\Ll\{\sP(\pi,y) -y\cdot z \Rr\}  .
\end{align*}
Sending $M\to\infty$, we deduce that the right-hand side in~\eqref{e.infP-yz=infP-yz} is equal to $-\infty$ and thus~\eqref{e.infP-yz=infP-yz} holds.
\end{proof}

\begin{proof}[Proof of Proposition~\ref{p.equiv}]
First, by Lemma~\ref{l.HJ_S^D}, we can work with
\begin{align*}\sup_{z\in\S^\D}\inf_{y\in\S^\D,\,\pi\in\Pi}\Ll\{\mathscr{P}(\pi,y) -y\cdot z+\frac{1}{2}\xi(z)\Rr\}.
\end{align*}
Secondly, recall the definition of $\mathcal{D}$ in \eqref{e.mathcalD} and we show that
\begin{align}\label{e.znotin=>}
    z \not\in \mathcal{D} \quad\implies  \quad \inf_{y\in \S^\D, \pi\in\Pi} \Ll\{\sP(\pi,y)-y\cdot z\Rr\} =-\infty.
\end{align}
We argue by contradiction and suppose that the infimum is strictly above $-\infty$. 
Applying \eqref{e.|z-nabla_f|<eps} in Lemma~\ref{l.y_eps} to $\sP$, we have that $z\in K$ for $K=\overline{\{ \nabla \sP(x):\:x\in\S^\D\}}$. Using $\frac{\sigma\sigma^\intercal}{N}=\sum_{i=1}^N\frac{1}{N}\sigma_{\cdot,i}\sigma_{\cdot,i}^\intercal$ and recalling that each column vector $\sigma_{\cdot,i}\in \supp P_1$ a.s., we have $\frac{\sigma\sigma^\intercal}{N} \in \mathcal{D}$. In view of Theorem~\ref{t}~\eqref{i.t.cvg_self-overlap}, we have that $K\subset \mathcal{D}$ and thus $z\in\mathcal{D}$ reaching a contradiction.

Therefore, we have verified \eqref{e.znotin=>} which implies that
\begin{align*}\sup_{z\in\S^\D}\inf_{y\in\S^\D,\,\pi\in\Pi}\Ll\{\mathscr{P}(\pi,y) -y\cdot z+\frac{1}{2}\xi(z)\Rr\} = \sup_{z\in\mathcal{D}}\inf_{y\in\S^\D,\,\pi\in\Pi}\Ll\{\mathscr{P}(\pi,y) -y\cdot z+\frac{1}{2}\xi(z)\Rr\}.
\end{align*}
Notice that $\mathcal{D}\subset \S^\D_+$ by the definition of $\mathcal{D}$ in \eqref{e.mathcalD}.
The desired result~\eqref{e.equiv_formulae} follows from the above display and Lemma~\ref{l.infP-yz=infP-yz}.
\end{proof}

\appendix

\section{Rewriting Panchenko's formula}\label{appendix}

We explain how to rewrite the generalized Parisi formula obtained by Panchenko in \cite{pan.vec} into \eqref{e.pan.parisi}. 
To resolve notational conflicts, we relabel $\kappa,\, \Gamma^\kappa;\; D,\, \Pi_D;\;\xi,\, \theta$ in \cite[Section~1]{pan.vec} by $\D,\, \S^\D;\; z,\,\Pi(z);\; \bxi,\, \btheta$, respectively.
We emphasize that $D$ in \cite{pan.vec} (relabeled by $z$ here) is the endpoint of a monotone path $\pi$, while here $D$ is the dimension.

In \cite[(1)--(3)]{pan.vec}, the notation of the spin configuration is given as follows: $\sigma_i$ therein is the $i$-th column vector $\sigma_{\cdot,i}$ here ($1\leq i\leq N$); $\sigma(k)$ therein is the $k$-th row vector $\sigma_{k,\cdot}$ here ($1\leq k\leq D$). In particular, $\sigma_i(k)$ is $\sigma_{k,i}$ here, and we have
\begin{align*}
    \sigma_{i}(k)\sigma'_{i}(k') = \Ll(\sigma_{\cdot,i}\sigma'^\intercal_{\cdot,i}\Rr)_{k,k'},\quad \sum_{i=1}^N \sigma_{i}(k)\sigma'_{i}(k') = \Ll(\sigma\sigma'^\intercal\Rr)_{k,k'}
\end{align*}

In \cite[(4)--(6)]{pan.vec}, the Hamiltonian $H_N(\sigma)$ is defined in the fashion of mixed $p$-spin interactions. We can summarize \cite[(9)--(12)]{pan.vec} as
\begin{align*}
    \E H_N(\sigma)H_N(\sigma') = N\sum_{k,k'=1}^\D \bxi_{k,k'}\Ll(\frac{\Ll(\sigma\sigma'^\intercal\Rr)_{k,k'}}{N}\Rr)
\end{align*}
where each $\bxi_{k,k'}:\R\to\R$ is given in \cite[(9)]{pan.vec}.
Comparing with our definition of $\xi$ in~\eqref{e.xi}, we have
\begin{align}\label{e.xi(a)=sum_bxi}
    \xi(a) = \sum_{k,k'}^\D \bxi_{k,k'}(a_{k,k'}),\quad\forall a\in\R^{\D\times\D}.
\end{align}

In \cite[(13)]{pan.vec}, $\mathcal{D}$ is defined in the same way as in~\eqref{e.mathcalD}. The definition of $\Pi$ in \cite[(15)]{pan.vec} is the same as in~\eqref{e.Pi}.
In \cite[(16)]{pan.vec}, the collection $\Pi(z)$ of paths with pinned endpoint is defined, with the difference that there $\pi\in \Pi(z)$ is required to satisfy $\pi(0)=0$ and $\pi(1)=z$. Here, we only require $\pi(0)\geqpsd 0$ and $\pi(1)= z$. The difference is negligible due to approximation and Lemma~\ref{l.Lip_P(pi,x)}.

In \cite[(17)--(19)]{pan.vec}, discrete paths are defined. They are of the following form: for some $r\in\N$,
\begin{align}\label{e.pi(s)=}
    \pi(s) = \sum_{j=0}^r \gamma_j \mathds{1}_{(x_{j-1},x_j]},\quad \forall s\in [0,1]
\end{align}
for a sequence of increasing real numbers
\begin{align*}
    x_{-1}=0\leq x_0\leq \cdots \leq x_{r-1}\leq x_r =1
\end{align*}
and a sequence of increasing matrices in $\S^\D_+$
\begin{align*}
    0=\gamma_0\leqpsd \gamma_1\leqpsd \cdots\leqpsd \gamma_{r-1}\leqpsd\gamma_r=z.
\end{align*}

In \cite[(20)--(22)]{pan.vec}, $\btheta$ and $\bxi'$ are defined. We summarize them here. For each $k,k'$, let $\bxi'_{k,k'}$ be the derivative of $\bxi_{k,k'}$, and set
\begin{align*}
    \btheta_{k,k'}(x) = x\bxi'_{k,k'}(x) - \bxi_{k,k'}(x),\quad\forall x\in\R.
\end{align*}
Then, for $a\in \R^{\D\times \D}$, define
\begin{align*}
    \bxi(a) = \Ll(\bxi_{k,k'}(a_{k,k'})\Rr)_{k,k'},\  \bxi'(a) = \Ll(\bxi'_{k,k'}(a_{k,k'})\Rr)_{ k,k'},\  \btheta(a) = \Ll(\btheta_{k,k'}(a_{k,k'})\Rr)_{ k,k'}.
\end{align*}
In view of~\eqref{e.xi(a)=sum_bxi}, we have the following relation
\begin{align}\label{e.bxi'=nabla_xi}
    \bxi' =\nabla \xi.
\end{align}
Later in \cite[(27)]{pan.vec}, the following notation is introduced
\begin{align*}
    \mathrm{Sum}(a) =\sum_{k,k'=1}^\D a_{k,k'},\quad\forall a \in \R^{\D\times \D}.
\end{align*}
Recall the definition of $\theta$ in~\eqref{e.theta}.
Using the above, we can easily verify that
\begin{align}\label{e.sumtheta=theta}
    \mathrm{Sum}\Ll(\btheta(a)\Rr) = a \cdot \nabla \xi(a) - \xi(a)=\theta(a),\quad\forall a \in \R^{\D\times \D}.
\end{align}

In \cite[(23)--(26)]{pan.vec}, the first term $\Phi$ in the Parisi functional is defined, where a parameter $\lambda$ is needed. There, $\lambda=\Ll(\lambda_{k,k'}\Rr)_{1\leq k\leq k'\leq \D}\in \R^{\D(\D+1)/2}$. Here, we identify $\lambda$ with the symmetric matrix $\tilde \lambda \in \S^\D$ given by $\tilde \lambda_{k,k}=\lambda_{k,k}$ and $\tilde \lambda_{k,k'}=\frac{1}{2}\lambda_{k,k'}$ for $k<k'$. Then, for every $a\in\S^\D$, we have
\begin{align}\label{e.sumlambda=lambdadot}
    \sum_{k\leq k'}\lambda_{k,k'}a_{k,k'} = \lambda\cdot a.
\end{align}
Using this identification,~\eqref{e.bxi'=nabla_xi}, and the standard computation of the Ruelle probability cascade (see the proof of \cite[Lemma~3.1]{pan}), we can rewrite $\Phi$ in \cite{pan.vec} as
\begin{align*}
    \Phi(\lambda,z,\pi) = \E \log \int\exp\Ll(w^{\nabla\xi\circ\pi}(\alpha)\cdot \sigma + \lambda\cdot \sigma\sigma^\intercal\Rr) \d P_1(\sigma)\d \mathfrak{R}(\alpha).
\end{align*}
Here, we have simplified the notation as explained below \cite[(28)]{pan.vec} by absorbing the dependence of $(x_j)_{j=0}^r$ and $(\gamma_j)_{j=0}^r$ to $\pi$.

In \cite[(27)--(31)]{pan.vec}, the definition of the Parisi functional is given and comments on its extension to continuous paths are made. Using~\eqref{e.sumtheta=theta} and~\eqref{e.sumlambda=lambdadot}, we can express the functional $\mathcal{P}$ defined in \cite[(31)]{pan.vec} as
\begin{align*}
    \mathcal{P}(\lambda,z,\pi) = \Phi(\lambda,z,\pi)-\lambda\cdot z - \frac{1}{2}\theta(z)+\frac{1}{2}\int_0^1 \theta(\pi(s))\d s.
\end{align*}
Comparing this with~\eqref{e.sP(pi,x)}, we have
\begin{align*}
    \Phi(\lambda,z,\pi) +\frac{1}{2}\int_0^1 \theta(\pi(s))\d s = \sP\Ll(\pi,\lambda + \frac{1}{2}\nabla\xi\circ\pi(1)\Rr).
\end{align*}
Recall the constraint $\pi(1)=z$ in \eqref{e.pi(s)=}.
Using this and the definition of $\theta$ in \eqref{e.theta}, we have
\begin{align}\label{e.P=P.....}
    \mathcal{P}(\lambda,z,\pi) = \sP\Ll(\pi,\lambda + \frac{1}{2}\nabla\xi(z)\Rr) - \Ll(\lambda + \frac{1}{2}\nabla\xi(z)\Rr)\cdot z + \frac{1}{2}\xi(z).
\end{align}

The generalized Parisi formula given in \cite[(32) in Theorem~1]{pan.vec} is
\begin{align*}
    \lim_{N\to\infty} F^\std_N = \sup_{z\in \mathcal{D}}\inf_{\lambda\in\S^\D,\,\pi\in\Pi(z) }\mathcal{P}(\lambda,z,\pi).
\end{align*}
Since $\lambda$ range over the linear space $\S^\D$, we can translate $\lambda$ and change $\lambda+\frac{1}{2}\nabla\xi(z)$ to a variable $y$. This along with~\eqref{e.P=P.....} gives
\begin{align*}
    \lim_{N\to\infty} F^\std_N = \sup_{z\in \mathcal{D}}\inf_{y\in\S^\D,\,\pi\in\Pi(z) }\Ll\{\sP(\pi,y)-y\cdot z +\frac{1}{2}\xi(z)\Rr\},
\end{align*}
which is the formula in~\eqref{e.pan.parisi}.

\small
\bibliographystyle{abbrv}
\newcommand{\noop}[1]{} \def\cprime{$'$}

\end{document}